\documentclass[12pt]{article}
\usepackage{amsmath, amsfonts, amsthm, amssymb, color, hyperref, extarrows, enumitem, nicefrac,blindtext, mathtools,}

\textwidth=17.5cm
\textheight=24cm
\parindent=16pt
\oddsidemargin=-0.5cm
\evensidemargin=-0.5cm
\topmargin=-2cm

\newcommand{\BigO}[1]{\ensuremath{\operatorname{O}\bigl(#1\bigr)}}

\newtheorem{theorem}{Theorem}[section]

\newtheorem{cor}[theorem]{Corollary}
\newtheorem{lem}[theorem]{Lemma}
\newtheorem{pro}[theorem]{Proposition}

\numberwithin{equation}{section}

\newtheorem{remark}[theorem]{Remark}

\newtheorem{example}{Example}

\usepackage[hyperpageref]{backref}

\usepackage{cite}

\hypersetup{
	colorlinks   = true,
	citecolor    = magenta}
	
\begin{document}
\title{\vspace{-1cm} \bf On solutions to $-\Delta u = V u$ near infinity \rm}
\author{ Yifei Pan \ \  and \ \ Yuan Zhang%\footnote{partially supported by AWM\&NSF DMS- }
}
\date{}

\maketitle

\begin{abstract}
We investigate the unique continuation  property and the sign changing behavior of weak solutions to $-\Delta u =Vu$ near infinity under certain conditions on the  blow-up rate of the potential $V$ near infinity.

\end{abstract}

\renewcommand{\thefootnote}{\fnsymbol{footnote}}
\footnotetext{\hspace*{-7mm}
\begin{tabular}{@{}r@{}p{16.5cm}@{}}
& 2020 Mathematics Subject Classification. Primary 35A02; Secondary 35B99, 32W05. \\
& Key words and phrases.
unique continuation property, Kelvin transform, 
\end{tabular}}

\section{Introduction}
Given  $V\in L^\infty_{loc}(\mathbb R^n), n\ge 2$, 
we study properties of real-valued  weak solutions to
$$    -\Delta u = V u \ \ \text{on}\ \ \mathbb R^n $$  near infinity.  %every  real-valued $  L^1_{loc}(\mathbb R^n)$ weak    solution  to   is necessarily in $  W^{2,2}_{loc}(\mathbb R^n)$. % by classical PDE theory (see also \cite{LMNN}). 
Landis \cite{La} conjectured that if $V\in L^\infty(\mathbb R^n)$, then every bounded weak solution $u$ such that $|u|\le  \BigO{e^{-k|x|^{1+\epsilon}}}$  near infinity for some $k>0, \epsilon>0$ must be identically $0$. In  \cite{Me},   Meshkov  constructed a nontrivial function $u_0$ which satisfies $ |u_0|\le \BigO{e^{-k|x|^{\frac{4}{3}}}}$ for some $k>0$ and  solves $  -\Delta u = V u $   
for  some  complex-valued $V\in L^\infty(\mathbb R^2)$, thus disproving the Landis conjecture for the complex case.  
 The conjecture has  attracted much attention, particularly since the fundamental works of Bourgain-Kenig \cite{BK} and Kenig  \cite{Ke} in the  real-valued case. See, for instance,   \cite{ABG,  Me, Da, DKW, KSW, Ro} and the references therein.  Recently, Logunov, Malinnikova, Nadirashvili and  Nazarov proved 
the full Landis conjecture when $n = 2$ in \cite{LMNN}.

    Motivated by the Landis conjecture, we first investigate the following unique continuation property at infinity.  Denote by   $(r, \theta)$ the polar coordinates, $ r>0, \theta\in S^{n-1}$. 

\begin{theorem}\label{maini}
      Let $V\in L_{loc}^{\infty}(\mathbb R^n)$. Assume further that   when $|x|>>1$,  $V$ is locally Lipschitz,     \begin{equation}\label{e15}
          V = \BigO{|x|^M} 
      \end{equation}  for some constant  $M\in \mathbb R$, and    \begin{equation}\label{e14}
          \frac{\partial}{\partial r} (r^2V) \ge 0 
      \end{equation} in terms of the polar coordinates $(r, \theta)$.  Let $u\in L^2_{loc}(\mathbb R^n)$ be a weak solution to 
    \begin{equation*}
        -\Delta u = V u\ \ \text{on}\ \ \mathbb R^n.\ \  
    \end{equation*}
 If   $\lim_{r\rightarrow \infty}r^m\int_{|x|>r} |u|^2 = 0$ for each $m\ge 0$, then $u\equiv 0$. \end{theorem}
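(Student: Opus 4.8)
The plan is to use the Kelvin transform to convert unique continuation at infinity into strong unique continuation at the origin, and then to run an Almgren-type frequency function argument in which \eqref{e14} supplies exactly the sign needed to tame the (now singular) transformed potential.

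First I would invert. Writing $x^\ast=x/|x|^2$ and $v(x)=|x|^{2-n}u(x^\ast)$ (with the usual conventions when $n\le2$), the conformal identity $\Delta v(x)=|x|^{-n-2}(\Delta u)(x^\ast)$ shows that $v$ is a weak solution of $-\Delta v=\tilde V v$ in a punctured ball $B_{r_0}\setminus\{0\}$, where $\tilde V(x)=|x|^{-4}V(x^\ast)$ and $r_0$ is small enough that all the hypotheses on $V$ hold on $\{|x|>1/r_0\}$. On the $v$-side, $\tilde V$ is locally Lipschitz away from $0$, the growth bound \eqref{e15} becomes $\tilde V=O(|x|^{-M-4})$ (only a power singularity at $0$), and, because inversion reverses radii and $\rho^2\tilde V(\rho,\theta)=r^2V(r,\theta)$ under $\rho=1/r$, the monotonicity \eqref{e14} becomes $\partial_r(r^2\tilde V)\le0$ near $0$. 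Changing variables gives $\int_{B_r}v^2=\int_{|y|>1/r}|y|^{-4}u^2\,dy$, so the decay hypothesis says exactly that $v$ vanishes to infinite order at $0$, i.e. $\int_{B_r}v^2=o(r^m)$ as $r\to0^+$ for every $m\ge0$. By interior elliptic regularity $v\in C^2(B_{r_0}\setminus\{0\})$. It then suffices to prove $v\equiv0$ near $0$: that gives $u\equiv0$ on $\{|x|>1/r_0\}$, and since $V\in L^\infty_{loc}(\mathbb R^n)$, the classical weak unique continuation property for $-\Delta u=Vu$ forces $u\equiv0$ on $\mathbb R^n$.

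For the continuation at $0$ I would set, for $0<r<r_0$, $H(r)=\int_{\partial B_r}v^2\,dS$ and $D(r)=\int_{B_r}(|\nabla v|^2-\tilde V v^2)\,dx$; using the equation together with the infinite-order vanishing of $v$ and $\nabla v$ at $0$ (which makes $\tilde V v^2$ integrable, since the singularity is only a power, and kills the boundary terms on small spheres) one obtains $D(r)=\int_{\partial B_r}v\,\partial_\nu v\,dS$. With the frequency $N(r)=rD(r)/H(r)$, one has $H'(r)=\tfrac{n-1}{r}H(r)+2D(r)$, while the Rellich--Pohozaev identity (multiply $-\Delta v=\tilde V v$ by $x\cdot\nabla v$ and integrate over $B_r$) gives
\[
rD'(r)=(n-2)D(r)+2r\!\int_{\partial B_r}\!(\partial_\nu v)^2\,dS-\int_{B_r}\!\bigl(2\tilde V+x\cdot\nabla\tilde V\bigr)v^2\,dx .
\]
The crux is that $2\tilde V+x\cdot\nabla\tilde V=\tfrac1r\partial_r(r^2\tilde V)\le0$ by \eqref{e14}, so the last term is nonnegative, giving $D'(r)\ge\tfrac{n-2}{r}D(r)+2\int_{\partial B_r}(\partial_\nu v)^2\,dS$. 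Combined with the Cauchy--Schwarz inequality $D(r)^2\le H(r)\int_{\partial B_r}(\partial_\nu v)^2\,dS$, this yields, on any interval where $H>0$ and $D>0$, the monotonicity $\tfrac{d}{dr}\log N(r)\ge2\bigl(\tfrac{\int_{\partial B_r}(\partial_\nu v)^2}{D(r)}-\tfrac{D(r)}{H(r)}\bigr)\ge0$. Assuming $v\not\equiv0$ near $0$, one checks $H(r)>0$ and $D(r)>0$ for all small $r$ — here \eqref{e14} is used once more, since it forces $r^2\tilde V\ge-C$, i.e. $\tilde V(x)\ge-C|x|^{-2}$ near $0$, which together with the power upper bound keeps the frequency from degenerating. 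Then $N(r)\le N(r_0)=:N_0$ for $r<r_0$, and integrating $\tfrac{d}{dr}\log H(r)=\tfrac{n-1+2N(r)}{r}$ gives $H(r)\ge c\,r^{\,n-1+2N_0}$, hence $\int_{B_r}v^2=\int_0^rH(s)\,ds\ge c'\,r^{\,n+2N_0}$ for small $r$, contradicting the infinite-order vanishing. So $v\equiv0$, and the theorem follows.

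The hard part will be the singular potential $\tilde V$: one must justify the finiteness of $D(r)$ and of $\int_{B_r}\tilde V v^2$, the Rellich--Pohozaev integration by parts across the origin, and the vanishing of the boundary integrals on $\partial B_\epsilon$ as $\epsilon\to0$, and this is precisely where the power bound \eqref{e15} (singularity of $\tilde V$ no worse than $|x|^{-M-4}$) has to be played against the infinite-order vanishing of $v$. The second delicate point is the positivity of $H$ and $D$ for small $r$, needed for $N$ and $\log N$ to make sense; it rests on the lower bound $\tilde V\ge-C|x|^{-2}$ from \eqref{e14}, and is genuinely more subtle when $n=2$, where no Hardy inequality with positive constant is available. (One could instead run a Carleman estimate on the cylinder $(T_0,\infty)\times S^{n-1}$ obtained via $t=-\log|x|$, in which \eqref{e14} says the transformed potential is nondecreasing in $t$; but the frequency function makes the role of \eqref{e14} most transparent.)
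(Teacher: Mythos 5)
Your overall strategy is the same as the paper's: invert by Kelvin, observe that $\tilde V=|x|^{-4}V(x/|x|^2)$ inherits a power singularity from \eqref{e15} and that $\partial_r(r^2\tilde V)\le0$ is exactly the pullback of \eqref{e14}, reduce to a strong unique continuation statement at the origin, and finish with classical interior UCP for $L^\infty_{loc}$ potentials. Where you diverge is in the local UCP step. You run an Almgren frequency function $N(r)=rD(r)/H(r)$ on balls plus a Rellich--Pohozaev identity, whereas the paper follows Li--Nirenberg: pass to $s=\log r$, set $v=r^{-(n-2)/2}w$ so the equation becomes $v_{ss}+\Delta_\theta v=mv$ with $m_s\ge0$ by \eqref{e14}, multiply by $2v_s$, integrate, and prove $\rho_s^2\le\rho\rho_{ss}$ for $\rho(s)=\int_{S^{n-1}}v^2\,d\theta$. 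These are close to equivalent in substance --- log-convexity of $\rho$ in $s$ is precisely monotonicity of the frequency, since $\rho(s)=e^{-s}H(e^s)$ and $(\log\rho)_{ss}=2rN'(r)$ --- but the paper's route derives the convexity inequality directly from the PDE without ever dividing by $D$ or $H$, so the positivity of $D$ (and the Hardy-type worry you raise at $n=2$) never enters. This is one concrete advantage of the Li--Nirenberg formulation over the straightforward frequency function here, especially since $\tilde V$ can be large positive of order $|x|^{-M-4}$ and there is no a priori reason for $D(r)>0$.

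There is also a genuine gap you leave open that is not just a matter of bookkeeping. You invoke ``the infinite-order vanishing of $v$ and $\nabla v$'' to kill the boundary terms, but infinite-order $L^2$ vanishing of $v$ at $0$ does \emph{not} by itself imply anything about $\nabla v$; the paper's own example $e^{-1/x^2}\cos(e^{1/x^2})$ is flat with derivative not even locally integrable. To get flatness of $\nabla v$ one must first extend the equation across the origin (the paper does this with a Harvey--Polking removable-singularity lemma, which uses the power bound \eqref{e15} on $\tilde V$ together with the flatness of $v$ to show $r^{-2}\int_{B_r}|v|\to0$), and then run a Caccioppoli estimate $\int_{B_r}|\nabla v|^2\lesssim r^{-2}\int_{B_{2r}}|v|^2+r^2\int_{B_{2r}}|\Delta v|^2$. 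You would need both of these before the Green/Rellich identities and the cutoff-to-$\partial B_\epsilon$ limits are legitimate; similarly, your assertion ``$D(r)=\int_{\partial B_r}v\,\partial_\nu v$'' presupposes the singularity has been removed or that one has a sequence of radii $\epsilon_j\to0$ along which the spherical integrals of $v_r^2$, $|\nabla_\theta v|^2$, and $|\tilde V|v^2$ all tend to zero (the paper extracts such a sequence via a mean-value argument). Finally, note that the potentially divergent boundary term $\int_{\partial B_{\epsilon}}\tilde V v^2$ --- which the paper flags as having been overlooked in \cite{LN} --- is precisely the place where \eqref{e15} is indispensable; your sketch recognizes that \eqref{e15} ``has to be played against'' the flatness of $v$, but this is the step that actually needs a written proof rather than a gesture.
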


\begin{remark}
    Although Theorem \ref{maini} is stated for solutions to $-\Delta u= V u$ on the entire Euclidean space $\mathbb R^n$, our proof only requires properties near infinity. This applies similarly to the rest of the results in the paper. Therefore, all of our theorems remain valid with the domain $\mathbb R^n$ replaced by any exterior domain $\mathbb R^n\setminus K$ for some compact subset $K$.
\end{remark}

The $M=0$ case in \eqref{e15} corresponds to Landis's case. In comparison to the conjecture, while imposing an additional condition \eqref{e14}, we  allow $V$ to blow up in a finite order at infinity in \eqref{e15}. In conclusion, we obtain the unique continuation property  by assuming that $u$ vanishes to infinite order in the $L^2$ sense (see Section 2 for the definition) rather than to some negative exponential order at infinity. Along these lines, there has been fruitful investigation  in the literature, though under the stronger superexponential  decay rate assumption. For instance,   Kenig, Silvestre, and Wang \cite{KSW}, and Davey, Kenig, and  Wang \cite{DKW} gave quantitative exponential decay lower bounds for solutions   near infinity when $n=2$ and $M=0$. %Froese, Herbst,  Hoffmann-Ostenhof and Hoffmann-Ostenhof \cite{FHHH} showed   when $V$ converges to some constant $C$ near infinity, then $u\equiv 0$  if $ |x|^\gamma e^{ \sqrt{\max(0, C)} |x|}u\in L^2(\mathbb R^n)$  for all $\gamma>0$. 
On the other hand, the works of Meshkov \cite{Me},  Froese, Herbst,  Hoffmann-Ostenhof and Hoffmann-Ostenhof \cite{FHHH}, as well as Cruz-Sampedro \cite{Cr}, demonstrate the unique continuation for $-\Delta u = (V+\lambda) u $ with $\lambda\in \mathbb R$, by requiring $ V = \BigO{|x|^{M}}$   and $ e^{\gamma|x|^{1+ \frac{1}{3}\max\{0, 1+2M\}}}u\in L^2(\mathbb R^n)$ for all $\gamma>0$. %where \delta = \frac{1}{3}\max\{0, 1+2M \}$.
%we also refer to a unique continuation result by Cruz-Sampedro \cite{Cr} who assumes only the condition \eqref{e15} on $V$ but requires a  stronger flatness assumption on $u$, specifically, 
Recently, Davey \cite{Da2}  improved this result by obtaining an  exponential decay lower bound   $  |u(x)|\ge e^{-k|x|^{1+\frac{M}{2}}(\ln|x|)^\frac{3}{2}} $ for some $k>0$ when $n=2$.     In the case when $V\in L^p(\mathbb R^n)$ for some large $p>1$, Davey and Zhu \cite{DZ, DZ2} also provided a similar  exponential decay lower bound $ |u(x)|\ge e^{-k|x|^\tau\ln|x|}$ for some constants $k, \tau>0$ dependent on $p$.

Although   every radial potential satisfying \eqref{e14}  can not change sign near infinity, there are many   non-radial potentials that may change sign near infinity  in the context of Theorem \ref{maini}. See Remark \ref{re3}. We note that in the case when $V\le 0$ on the entire space $\mathbb R^n$, the unique continuation property is trivial for global solutions,  due to  the strong maximum principle. We refer the interested reader  to a result by Davey \cite{Da} for more discussions on bounded sign-changing potentials in the plane. On the other hand,  among other results, Arapostathis, Biswas, and Ganguly \cite{ABG} and  Rossi \cite{Ro} recently proved the strong Landis conjecture with $\epsilon=0$  for nonnegative solutions. Rossi   \cite{Ro} also  proved the conjecture for radial   potentials. 

%See also   by    for non-negative solutions %or for non-negative  generalized principal eigenvalue of the operator $-\triangle +V$  
%making use of probabilistic tools. 

Regarding the unique continuation property, there are   generally two methods to analyze the maximal order vanishing of the solutions. One method establishes some Carleman-type weighted integral inequalities, see \cite{Ho3, JK, Ke2, KT, Wo, DZ2}, etc. The other method introduced by Garofalo and Lin utilizes monotonicity of Almgren-type frequency functions  in conjunction with doubling inequalities. For instance,   \cite{GL, GL2, Ku, Zh}, etc. Our main tool of the proof  involves a local unique continuation property presented by Agman and Nirenberg \cite{AN}, as well as Li and Nirenberg \cite{LN}, coupled with the application of the Kelvin transform which brings the equation from infinity  to a punctured neighborhood of the origin.   As the Kelvin transform introduces extra strong singularity at the origin, we will need the assumption \eqref{e15}  to resolve the singularity of the new equation at the origin  (in Corollary \ref{HPC}). This assumption also reduces the regularity assumption of the solution to being merely in $L_{loc}^2(B_1\setminus \{0\})$ in Theorem \ref{main}. In fact,  as demonstrated in Example \ref{ex1} and Example \ref{ex2}, the unique continuation property fails at infinity in general if   \eqref{e15} and/or \eqref{e14} is dropped.

%We explore further applications to the unique continuation property of $\Delta u = V u$ at infinity. 

One may also compare    Theorem \ref{maini} with another unique continuation result in \cite[Theorem 14.7.2]{Ho}   where $V =  V_1+ \lambda $ with $\lambda >0$ and $|V_1|\le Cr^{-1}$ for some constant $C>0$, under a stronger assumption where both $u$ and $\nabla u$ vanish to infinite order at infinity in the $L^2$ sense. In fact, as a side product, one can immediately obtain 

\begin{cor}\label{ch}
    Given $V$ satisfying all the assumptions as in Theorem \ref{maini}, let $u\in L^2_{loc}(\mathbb R^n)$ be a weak solution to 
    \begin{equation*}
        -\Delta u -Vu=\lambda u\ \ \text{on}\ \ \mathbb R^n\ \  
    \end{equation*}
 for some constant $\lambda\ge 0$. If   $\lim_{r\rightarrow \infty}r^m\int_{|x|>r} |u|^2 = 0$ for each $m\ge 0$, then $u\equiv 0$. 
\end{cor}

In particular, Theorem \ref{maini} can be applied to infer   the  unique continuation property  at infinity when the potential is homogeneous as follows.  See also Example \ref{ex7}  for the construction of more potentials that satisfy the assumptions of  Theorem \ref{maini}.

\begin{cor}\label{ho}
    Let $V\in L^\infty_{loc}(\mathbb R^n\setminus\{0\})$   be a   homogeneous function of order $\alpha$ on $\mathbb R^n$.  Assume either $\alpha=2$, or $\alpha> -2$ and $V\ge 0$ on $\mathbb R^n$, or $\alpha< -2$ and $V\le 0$ on $\mathbb R^n$.   Let $u\in L^2_{loc}(\mathbb R^n)$ be a weak solution to 
    \begin{equation*}
        -\Delta u = V u\ \ \text{on}\ \ \mathbb R^n.
    \end{equation*}    
   If  $\lim_{r\rightarrow \infty}r^m\int_{|x|>r} |u|^2 = 0$ for each $m\ge 0$,  then $u\equiv 0$.
\end{cor}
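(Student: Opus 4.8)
The plan is to check that every homogeneous potential $V$ falling under one of the three alternatives of the statement satisfies all the hypotheses of Theorem \ref{maini}; the conclusion $u\equiv 0$ will then follow directly. Write $r=|x|$, $\theta=x/|x|\in S^{n-1}$, and put $W:=V|_{S^{n-1}}$, so that $V(x)=r^{\alpha}\,W(\theta)$; since $V\in L^\infty_{loc}(\mathbb R^n\setminus\{0\})$ and $S^{n-1}$ is compact, $W\in L^\infty(S^{n-1})$.

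First I would check the growth bound \eqref{e15}. For $|x|\ge 1$,
\begin{equation*}
|V(x)|=|x|^{\alpha}\,|W(\theta)|\le\|W\|_{L^\infty(S^{n-1})}\,|x|^{\max\{\alpha,0\}},
\end{equation*}
so $V=O(|x|^{M})$ with $M=\max\{\alpha,0\}$. I would also note that homogeneity propagates the regularity of $W$: on any compact annulus $\{1\le|x|\le 2\}$ the maps $x\mapsto|x|^{\alpha}$ and $x\mapsto x/|x|$ are smooth, so for the potentials in view (pure powers $V=c|x|^{\alpha}$, or any $V$ with Lipschitz trace on the sphere) $V$ is locally Lipschitz on $\mathbb R^n\setminus\{0\}$, in particular where $|x|\gg 1$, as required. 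Next I would check the monotonicity condition \eqref{e14}. Since $r^{2}V=r^{\alpha+2}W(\theta)$,
\begin{equation*}
\frac{\partial}{\partial r}\bigl(r^{2}V\bigr)=(\alpha+2)\,r^{\alpha+1}\,W(\theta),
\end{equation*}
and the three alternatives are exactly designed to make this nonnegative: in the borderline case $\alpha+2=0$ the right-hand side vanishes identically; if $\alpha+2>0$ then $W=r^{-\alpha}V\ge 0$ because $V\ge 0$; and if $\alpha+2<0$ then $W\le 0$ because $V\le 0$. Hence \eqref{e14} holds for every $r>0$, a fortiori for $|x|\gg 1$.

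With \eqref{e15} and \eqref{e14} verified and the vanishing hypothesis on $u$ unchanged, Theorem \ref{maini} applies and gives $u\equiv 0$ whenever $\alpha\ge 0$, in which case $V\in L^\infty_{loc}(\mathbb R^n)$ as well. When $\alpha<0$ the potential may be unbounded near the origin, so Theorem \ref{maini} is not literally applicable; here I would re-run its argument on $\mathbb R^n\setminus\{0\}$. The Kelvin transform carries a neighborhood of infinity onto a punctured neighborhood of the origin, on which \eqref{e15} resolves the induced singularity (Lemma \ref{HP2}) and the infinite-order decay of $u$ forces the Kelvin transform of $u$ to vanish there; the local unique continuation property of Li--Nirenberg \cite{LN} then propagates this vanishing throughout $\mathbb R^n\setminus\{0\}$, a step that only ever uses the Lipschitz regularity of $V$ away from the origin. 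This yields $u\equiv 0$ on $\mathbb R^n\setminus\{0\}$, hence a.e.\ on $\mathbb R^n$ since $\{0\}$ is a null set.

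The homogeneity bookkeeping for \eqref{e15} and \eqref{e14} is routine; the one step I expect to require genuine care is the last reduction, namely confirming that the proof of Theorem \ref{maini} --- whose statement is phrased for $V\in L^\infty_{loc}(\mathbb R^n)$ --- goes through on the punctured space, so as to cover homogeneous potentials with an inverse-power singularity at the origin.
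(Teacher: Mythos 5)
Your reduction to Theorem \ref{maini} is the right strategy, and your computation of $\partial_r(r^2V)=(\alpha+2)\,r^{\alpha+1}V(1,\theta)=(\alpha+2)\,rV$ matches the paper's exactly, including the correct reading of the (surely misprinted) first alternative as the borderline case $\alpha=-2$. However, there is a genuine gap in how you verify the Lipschitz hypothesis: you establish local Lipschitz regularity of $V$ only for "pure powers $V=c|x|^\alpha$ or any $V$ with Lipschitz trace on the sphere," but the corollary imposes only $V\in L^\infty_{loc}(\mathbb R^n\setminus\{0\})$, so the angular part $V(1,\cdot)$ can be merely bounded and measurable, in which case $V$ need not be locally Lipschitz at all. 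The paper dispatches this by invoking Remark \ref{re2}, which weakens the Lipschitz assumption in Theorem \ref{maini} to the requirement that $V_r\in L^\infty_{loc}$ in polar coordinates for $r\gg 1$; since $V_r=\alpha r^{\alpha-1}V(1,\theta)$ this is automatic for any homogeneous $V$ with bounded angular part. An inspection of the proof of Theorem \ref{main} confirms that this is indeed all that is used --- the monotonicity $(r^2W)_r\le0$ and the boundary-term manipulations only ever involve the radial derivative of the potential --- so you should appeal to Remark \ref{re2} rather than try to derive Lipschitz regularity in $x$, which simply fails for the general class allowed.

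Your second concern, that for $\alpha<0$ the potential is not in $L^\infty_{loc}(\mathbb R^n)$ so Theorem \ref{maini} does not apply verbatim, is well taken and is in fact not addressed in the paper's two-line proof. Your proposed fix is correct: the Kelvin-transform part of the argument only needs $V$ on $|x|>1$ (which is bounded there), yielding $u\equiv 0$ on $\mathbb R^n\setminus B_1$; one then propagates the vanishing inward using the classical $L^\infty_{loc}$ unique continuation property on $B_1\setminus\{0\}$ (where $V$ is locally bounded), and concludes $u\equiv 0$ a.e.\ since $\{0\}$ is a null set. So on this point you are being more careful than the paper, and the extra care is warranted.
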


 %since nonnegative constant functions automatically satisfy the assumption for $V$ in Theorem \ref{maini} (and Corollary \ref{ho}), we immediately obtain the  following unique continuation property for eigenfunctions of Laplacian at infinity. 

\begin{cor}\label{ei2}
 Let $u\in L_{loc}^2(\mathbb R^n)$ be an   eigenfunction of $-\Delta$ on $\mathbb R^n$ with  
    $$- \Delta u =  c^2 u \ \ \text{on} \ \  \mathbb R^n$$
    for some constant $c$. Then $u$ does not vanish to infinite order in the $L^2$ sense at $\infty$. Namely, there exists some  $N\ge 0$ such that $\varlimsup_{r\rightarrow \infty}r^N\int_{|x|>r} |u|^2 = \infty$.  \end{cor}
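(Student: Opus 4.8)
The plan is to read this off Corollary \ref{ho}. The point is simply that the constant potential $V\equiv c^2$ is a homogeneous function of order $\alpha=0$ on $\mathbb R^n$, lies in $L^\infty_{loc}(\mathbb R^n\setminus\{0\})$ (indeed in $L^\infty_{loc}(\mathbb R^n)$), and satisfies $\alpha=0>-2$ together with $V=c^2\ge 0$ (we take $c$ real, so that $c^2\ge 0$; a complex $c$ is handled at the end). Thus the equation $-\Delta u=c^2u$ falls under the scope of Corollary \ref{ho}: every $u\in L^2_{loc}(\mathbb R^n)$ solving it with $\lim_{r\to\infty}r^m\int_{|x|>r}|u|^2=0$ for all $m\ge 0$ is identically zero.

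I would then argue by contraposition. Assume the conclusion fails, i.e.\ $\varlimsup_{r\to\infty}r^N\int_{|x|>r}|u|^2<\infty$ for every $N\ge 0$. Fix $m\ge 0$ and apply this with $N=m+1$: there is a constant $C_m$ with $r^{m+1}\int_{|x|>r}|u|^2\le C_m$ for all large $r$, hence $r^m\int_{|x|>r}|u|^2\le C_m/r\to 0$ as $r\to\infty$. Since $m$ was arbitrary, $\lim_{r\to\infty}r^m\int_{|x|>r}|u|^2=0$ for every $m\ge 0$, so Corollary \ref{ho} forces $u\equiv 0$, contradicting the fact that an eigenfunction is by definition nontrivial. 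Therefore some $N\ge 0$ with $\varlimsup_{r\to\infty}r^N\int_{|x|>r}|u|^2=\infty$ must exist.

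If $c$, and hence $u$, is complex, I would first reduce to the real-valued setting: writing $u=u_1+iu_2$ with $u_1,u_2$ real, each $u_j$ solves $-\Delta u_j=c^2u_j$ with $c^2\ge 0$, at least one $u_j$ is nontrivial, and $\int_{|x|>r}|u|^2\ge \int_{|x|>r}|u_j|^2$, so the conclusion for that $u_j$ yields the claim for $u$. Beyond this bookkeeping there is essentially no obstacle here, since the statement is a direct specialization of Corollary \ref{ho}; the only points that warrant a little care are the quantifier manipulation that upgrades ``finite $\varlimsup$ at every order'' to ``zero $\lim$ at every order'' and the convention that eigenfunctions are nonzero.
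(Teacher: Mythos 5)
Your proof is correct and takes essentially the same route as the paper: both reduce to Corollary \ref{ho} with $\alpha=0$ and $V=c^2\ge 0$, then convert ``not vanishing to infinite order'' into the $\varlimsup=\infty$ statement via a routine quantifier argument (the paper argues directly from $\varlimsup r^N\int>0$ to $\varlimsup r^{N+1}\int=\infty$, you argue by contraposition; these are interchangeable). Your extra remarks about complex-valued $u$ and the nontriviality convention for eigenfunctions are harmless additions not present in the paper, which works with real-valued solutions throughout.
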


%In the case when $c=0$,   the assumption in Theorem \ref{ei} implies $u\in L^\infty$, and thus Theorem \ref{ei} is a consequence of the Liouville's theorem. 

Next, we  study the sign changing behavior of bounded weak solutions near infinity. The following theorem states that when $V$ satisfies certain integrability condition near infinity, then bounded weak solutions must change sign constantly near infinity. 

\begin{theorem}\label{main2}
    Let $V\in L^\infty_{loc}(\mathbb R^n)$. Assume further that    $V\ge 0$ when $|x|>>1$, $\int_{|x|>r} \frac{1}{|x|^{n+2}V}<\infty$ and  $\int_{|x|>r} |x|^{4-2n}V =\infty$ for some $r>>1$. Then every  non-constant bounded weak solution to 
  $$   -\Delta u = V u\ \ \text{on}\ \ \mathbb R^n.$$
    must change sign near $\infty$. Namely, if $u\ge 0 $ (or $u\le 0)$ when $|x|>>1$, then $u\equiv 0$.
\end{theorem}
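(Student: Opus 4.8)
The plan is to argue by contradiction, playing the two integrability hypotheses on $V$ off against each other: the first (together with boundedness of $u$) forces an \emph{averaged} decay estimate on $Vu$, while superharmonicity forces a \emph{pointwise lower bound} on $u$, and the two are incompatible with $\int_{|x|>r}|x|^{4-2n}V=\infty$. Replacing $u$ by $-u$ if necessary, assume $u\ge 0$ on $E:=\{|x|>R_0\}$ for some large $R_0$, and suppose toward a contradiction that $u\not\equiv 0$. Since $V\in L^\infty_{loc}$ and $u$ is bounded, elliptic regularity makes $u$ (after modification on a null set) continuous and in $W^{2,p}_{loc}$ for all $p$; on $E$ one has $\Delta u=-Vu\le 0$, so $u$ is superharmonic there. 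If $u$ vanished on some $\{|x|>R\}$, weak unique continuation (valid because $V$ is locally bounded) would force $u\equiv 0$ on $\mathbb R^n$; hence we may assume $u\not\equiv 0$ on every exterior domain, and then the strong minimum principle (resp. Carathéodory uniqueness when $n=1$) gives $u>0$ on $E$.

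\emph{Step 1 (an averaged estimate from boundedness).} Let $m(r)$ denote the mean of $u$ over $\partial B_r$, $r>R_0$, and let $\overline{Vu}(r)$ denote the mean of $Vu$ over $\partial B_r$, which is $\ge 0$ on $E$. Integrating $\Delta u=-Vu$ over annuli and differentiating in $r$ gives
\[
\big(r^{n-1}m'(r)\big)'=-\,r^{n-1}\,\overline{Vu}(r)\le 0\qquad(r>R_0),
\]
so $r^{n-1}m'(r)$ is nonincreasing. Integrating this identity twice, interchanging the order of integration by Tonelli's theorem, and using that $0\le m(r)\le\|u\|_{L^\infty}$ stays bounded — which rules out $m(r)\to-\infty$ — I would deduce
\[
\int_{\{|x|>R_0\}}|x|^{2-n}\,V(x)u(x)\,dx<\infty .
\]
\emph{Step 2 (a pointwise lower bound from superharmonicity).} Fix $R_1>R_0$ with $u\ge\delta_0>0$ on $\partial B_{R_1}$. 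For $R>R_1$ compare the superharmonic function $u$ on the annulus $\{R_1<|x|<R\}$ with the harmonic function $w_R$ equal to $\delta_0$ on $\partial B_{R_1}$ and to $0$ on $\partial B_R$ — explicitly $w_R(x)=\delta_0\,(|x|^{2-n}-R^{2-n})/(R_1^{2-n}-R^{2-n})$ when $n\ge 3$, and the corresponding logarithmic (resp. linear) expression when $n=2$ (resp. $n=1$). Since $u\ge w_R$ on the boundary of the annulus, the comparison principle gives $u\ge w_R$ inside; letting $R\to\infty$ yields, for some $c_0>0$,
\[
u(x)\ \ge\ c_0\,|x|^{2-n}\ \ (n\ge 3),\qquad u(x)\ \ge\ c_0\ \ (n=1,2),\qquad |x|>R_1 .
\]

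\emph{Step 3 (the contradiction).} For $n\ge 2$, writing $|x|^{4-2n}=|x|^{2-n}\cdot|x|^{2-n}$ and using $|x|^{2-n}\le c_0^{-1}u(x)$ from Step 2 (read as $1\le c_0^{-1}u$ when $n=2$), together with Step 1,
\[
\int_{\{|x|>R_1\}}|x|^{4-2n}V\,dx\ \le\ c_0^{-1}\int_{\{|x|>R_1\}}|x|^{2-n}V u\,dx\ <\ \infty ,
\]
contradicting the hypothesis $\int_{|x|>r}|x|^{4-2n}V=\infty$. For $n=1$, Step 1 and $u\ge c_0$ give $\int_{|x|>R_1}|x|\,V\,dx<\infty$; combined with the hypothesis $\int_{|x|>r}|x|^{-3}V^{-1}\,dx<\infty$ (which in particular forces $V>0$ a.e.), the Cauchy--Schwarz inequality yields
\[
\int_{|x|>R_1}\frac{dx}{|x|}\ \le\ \Big(\int_{|x|>R_1}|x|V\,dx\Big)^{1/2}\Big(\int_{|x|>R_1}\frac{dx}{|x|^{3}V}\Big)^{1/2}\ <\ \infty ,
\]
which is absurd. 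In all cases we reach a contradiction, so $u\equiv 0$, and the case $u\le 0$ near infinity follows by applying this to $-u$.

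The step I expect to be the main obstacle is Step 1: justifying the identity for $m(r)$ for a merely weak solution, and — above all — running the two integrations so that the \emph{boundedness} of $m$ alone extracts exactly the weight $|x|^{2-n}$ (when $n\ge 3$ this hinges on showing $r^{2-n}\int_{R_0}^{r}s^{n-1}\overline{Vu}(s)\,ds\to 0$, using monotonicity of that integral). Secondary points needing care are the passage to the limit $R\to\infty$ in the barrier comparison of Step 2 and the bookkeeping of the borderline exponents in Step 3 at $n=2$.
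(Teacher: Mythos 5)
Your proof is correct, and it takes a genuinely different route from the paper. The paper pulls the problem to the origin via the Kelvin transform ($w(x)=|x|^{2-n}u(x/|x|^2)$, $W(x)=|x|^{-4}V(x/|x|^2)$) and reduces Theorem~\ref{main2} to its Theorem~\ref{ca}: there, a cutoff/H\"older argument using $\frac{1}{|x|^{n+2}W}\in L^1$ shows $Ww\in L^1$ near $0$ (Lemma~\ref{l1}), a Harvey--Polking lemma extends the equation across $0$ (Lemma~\ref{rs}), and the mean-value inequality for superharmonic $w$ gives $w\ge\gamma_0>0$ near $0$, contradicting $W\notin L^1$. Your Steps~1--2 are precisely the Kelvin mirrors of these: the change of variables sends $\int_{B_\epsilon}Ww$ to $\int_{|x|>1/\epsilon}|x|^{2-n}Vu$, sends $w\ge\gamma_0$ to $u\ge\gamma_0|x|^{2-n}$, and sends $\int_{B_\epsilon}W$ to $\int_{|x|>1/\epsilon}|x|^{4-2n}V$ — so the contradiction is the same one, read at infinity rather than at the origin. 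What your version buys is a cleaner derivation of the $L^1$ estimate: your Step~1 (integrating $(r^{n-1}m')'=-r^{n-1}\overline{Vu}\le 0$, using $0\le m\le\|u\|_\infty$, and applying Tonelli) yields $\int_{|x|>R_0}|x|^{2-n}Vu<\infty$ from boundedness of $u$ alone; I checked this for $n\ge3$ (bound $\int r^{1-n}I(r)\,dr$ and exchange orders), for $n=2$ (if $I(\infty)=\infty$ then $rm'\to-\infty$ forces $m\to-\infty$), and for $n=1$ (concavity and boundedness give $u'\to 0$, then Tonelli on $u(R)-u(R_0)=\int_{R_0}^R\!\int_r^\infty Vu$), and it holds in each case. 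In particular your argument never invokes $\int_{|x|>r}\frac{1}{|x|^{n+2}V}<\infty$ when $n\ge2$, so it in fact proves a slightly stronger statement in those dimensions; that hypothesis is only used in your Step~3 Cauchy--Schwarz trick for $n=1$ (your opening sentence, which attributes Step~1 to the first hypothesis, is therefore a bit misleading — the hypothesis plays no role there). One small point worth recording explicitly: to justify the differential identity for $m(r)$ you need $u\in W^{2,p}_{loc}$, which you correctly obtain from $V\in L^\infty_{loc}(\mathbb R^n)$ and boundedness of $u$; this is exactly the extra regularity the paper's approach does not assume (there $w$ need only be $L^1_{loc}$ away from $0$), and it is the price you pay for avoiding the Kelvin transform and the removable-singularity lemmas.
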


As indicated by the following corollary, there exist  ample examples for which the assumptions of the potential in Theorem \ref{main2} are satisfied. 

\begin{cor}\label{ho2}
    Let $V\in L^\infty_{loc}(\mathbb R^n\setminus\{0\})$   be a positive  homogeneous function of order $\alpha$    on $\mathbb R^n\setminus\{0\}$.    Let $u $ be a bounded weak solution to 
    \begin{equation*}
        -\Delta u = V u\ \ \text{on}\ \ \mathbb R^n.
    \end{equation*}    
   If    $\alpha>-2$ when $n\le 2$, or $\alpha \ge  n-4  $ when $n\ge 3$, then   $u  $ must change sign near $\infty$.
\end{cor}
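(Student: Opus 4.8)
The plan is to obtain this as a direct consequence of Theorem \ref{main2}; the entire content is to translate the homogeneity and positivity of $V$ into the two integral conditions appearing there.

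First I would record a two-sided bound. Since $V$ is positive and homogeneous of order $\alpha$, its restriction to the unit sphere $S^{n-1}$ belongs to $L^\infty(S^{n-1})$ (because $V\in L^\infty_{loc}(\mathbb R^n\setminus\{0\})$) and, by compactness of $S^{n-1}$, is bounded below by a positive constant; hence there are constants $0<c_1\le c_2<\infty$ with
$$c_1|x|^\alpha\le V(x)\le c_2|x|^\alpha \qquad (x\neq 0).$$
In particular $V\ge 0$ everywhere (so the first hypothesis of Theorem \ref{main2} holds), and $1/V$ is finite a.e. Writing the two integrals of Theorem \ref{main2} in polar coordinates $(\rho,\theta)$ and using this bound,
$$\int_{|x|>r}\frac{dx}{|x|^{n+2}V}\ \asymp\ \int_r^\infty \rho^{\,(n-1)-(n+2)-\alpha}\,d\rho=\int_r^\infty \rho^{-3-\alpha}\,d\rho,$$
$$\int_{|x|>r}|x|^{4-2n}V\,dx\ \asymp\ \int_r^\infty \rho^{\,(n-1)+(4-2n)+\alpha}\,d\rho=\int_r^\infty \rho^{\,3-n+\alpha}\,d\rho.$$
The first integral is finite exactly when $\alpha>-2$, and the second diverges exactly when $\alpha\ge n-4$.

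It then remains to check that the hypotheses on $\alpha$ force both $\alpha>-2$ and $\alpha\ge n-4$. If $n\le 2$ then $n-4\le -2$, so the assumption $\alpha>-2$ already yields $\alpha> n-4$, hence $\alpha\ge n-4$. If $n\ge 3$ then $n-4\ge -1$, so the assumption $\alpha\ge n-4$ already yields $\alpha>-2$. In either case both integral conditions of Theorem \ref{main2} hold. (When $\alpha<0$, $V$ is not in $L^\infty_{loc}(\mathbb R^n)$, but this is immaterial: both the hypotheses and the conclusion of Theorem \ref{main2} concern only the region $|x|\gg 1$, where $V\in L^\infty_{loc}$ automatically.) Applying Theorem \ref{main2}, any non-constant bounded weak solution must change sign near $\infty$; a constant solution is forced to vanish since $V>0$, so in all cases $u$ changes sign near $\infty$ unless $u\equiv 0$. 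I do not expect a genuine obstacle here — the only points needing a little care are the endpoint bookkeeping of the exponents (e.g.\ for $n=2$ the divergence threshold is $\alpha\ge n-4=-2$, still implied by the strict assumption $\alpha>-2$) and extracting the lower bound $c_1>0$, which is precisely where positivity of $V$, together with compactness of the sphere, is used.
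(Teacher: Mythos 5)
Your proof follows exactly the route the paper takes: verify that a positive homogeneous $V$ satisfies the two integral hypotheses of Theorem~\ref{main2} (the paper states this in a single sentence, "one can also directly verify...''), then observe that the two regimes $\alpha>-2$ (for $n\le 2$) and $\alpha\ge n-4$ (for $n\ge 3$) each force both conditions. Your polar-coordinate computation and the endpoint bookkeeping are correct.

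One small caveat worth flagging, though it is present in the paper as well: you invoke compactness of $S^{n-1}$ to conclude $\mathrm{ess\,inf}_{S^{n-1}} V>0$, but this argument really requires $V$ to be continuous on the sphere, whereas the hypothesis only gives $V\in L^\infty_{loc}$ and pointwise positivity. A positive $L^\infty$ function can have essential infimum zero (and even $1/V\notin L^1(S^{n-1})$), in which case the first integral condition of Theorem~\ref{main2} could fail even for $\alpha>-2$. What you actually need is $\int_{S^{n-1}} V^{-1}<\infty$, which follows from continuity + positivity, or from a lower bound $\mathrm{ess\,inf}\,V>0$, but not from positivity alone. Since the paper's proof is equally terse on this point, I regard your proposal as faithfully reproducing the intended argument; if you want to be airtight you could simply add the mild extra hypothesis that $V|_{S^{n-1}}$ is bounded away from zero (automatic in all the examples the paper applies this to, e.g.\ $V=c^2/|x|^\alpha$).
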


According to Liouville's theorem, every  bounded eigenfunction of Laplacian with respect to eigenvalue $0$ (namely,  harmonic function) must be a constant. The following theorem shows that bounded eigenfunctions  with respect to nonzero eigenvalues will constantly change sign near infinity. As seen   in the statement of Corollary \ref{ho2}, the order $\alpha =0$ case is covered by this corollary only when $n\le 4$. The remaining cases when $n\ge 5$  has to be  dealt  with through a different approach. See Section 4 for its proof. 

\begin{theorem}\label{ei}
    Every bounded   eigenfunction  of $-\Delta$ with 
    $$ -\Delta u = c^2 u \ \ \text{on} \ \  \mathbb R^n$$
for a constant $ c\ne 0$ must change sign near $\infty$. 
\end{theorem}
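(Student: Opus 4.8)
The plan is to split off the low-dimensional range and then give the essential argument. When $n\le 4$ the constant potential $V\equiv c^2$ is positive and homogeneous of order $\alpha=0$, and $0=\alpha>-2$ if $n\le 2$ while $0=\alpha\ge n-4$ if $n=3,4$; hence Corollary \ref{ho2} applies directly and forces any bounded solution to change sign near $\infty$. For $n\ge 5$ the integrability hypotheses of Theorem \ref{main2} fail for $V\equiv c^2$, so a separate argument is needed, and I would run it uniformly in $n$. Argue by contradiction: assume $u$ is a bounded weak solution of $-\Delta u=c^2u$ on $\mathbb R^n$ and, after replacing $u$ by $-u$ if necessary, $u\ge 0$ for $|x|>R$; the goal is $u\equiv 0$. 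First note that by elliptic regularity (the equation has constant coefficients) $u$ is smooth, in fact real-analytic.

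Next I would pass to the spherical average $w(r):=\int_{S^{n-1}}u(r\theta)\,d\theta$ for $r>0$. Writing $\Delta=\partial_r^2+\tfrac{n-1}{r}\partial_r+\tfrac{1}{r^2}\Delta_{S^{n-1}}$ and integrating $-\Delta u=c^2u$ over $S^{n-1}$, the angular term integrates to zero on the closed manifold $S^{n-1}$, so $w$ solves the ordinary differential equation $w''+\tfrac{n-1}{r}w'+c^2w=0$ on $(0,\infty)$. The substitution $w(r)=r^{1-n/2}v(cr)$ converts this into Bessel's equation of order $\nu=\tfrac n2-1$, whose solution space is spanned by $r^{1-n/2}J_\nu(cr)$ and $r^{1-n/2}Y_\nu(cr)$. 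Since $u$ is continuous and bounded near the origin, $w(r)\to |S^{n-1}|\,u(0)$ as $r\to 0^+$; as $r^{1-n/2}Y_\nu(cr)$ is unbounded as $r\to 0^+$ (of order $r^{2-n}$ for $n\ge 3$, of order $\log r$ for $n=2$), that component must be absent, and therefore $w(r)=A\,r^{1-n/2}J_\nu(cr)$ for some constant $A$.

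Finally I would invoke the oscillation of Bessel functions: from $J_\nu(x)\sim\sqrt{2/(\pi x)}\,\cos\!\big(x-\tfrac{\nu\pi}{2}-\tfrac{\pi}{4}\big)$ as $x\to\infty$, the function $r\mapsto r^{1-n/2}J_\nu(cr)$ takes strictly negative values at arbitrarily large $r$. Hence $A\ne 0$ would contradict $w(r)=\int_{S^{n-1}}u(r\theta)\,d\theta\ge 0$ for all $r>R$; thus $A=0$ and $w\equiv 0$ on $(0,\infty)$. For each fixed $r>R$ the nonnegative continuous function $\theta\mapsto u(r\theta)$ on $S^{n-1}$ then has vanishing integral, so it vanishes identically; that is, $u\equiv 0$ on $\{|x|>R\}$. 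Real-analyticity of $u$ now gives $u\equiv 0$ on $\mathbb R^n$ (alternatively, apply Corollary \ref{ho} or Theorem \ref{maini} with $V\equiv c^2$, noting $\partial_r(r^2c^2)=2c^2r\ge 0$), which completes the proof.

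There is no deep obstacle, but the steps that require care are the reduction to the Bessel ODE at the singular endpoint $r=0$ — where one uses the boundedness of $u$, hence of $w$, near $0$ to discard the singular solution $Y_\nu$ — and the use of the precise oscillatory asymptotics of $J_\nu$ to be certain that the surviving solution genuinely changes sign for large $r$ rather than merely decaying to zero there. Note also that the full spherical-harmonic decomposition of $u$ is not needed: only the degree-zero component $w$ enters the argument.
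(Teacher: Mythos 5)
Your argument is correct (for $n\ge 2$, which is the interesting range) but it follows a genuinely different route from the paper. The paper pulls the equation to a punctured neighborhood of the origin via the Kelvin transform, obtaining $-\Delta w = \frac{c^2}{|x|^4}w$ on $B_1\setminus\{0\}$; it then applies Theorem~\ref{ca} directly when $n\le 4$, and when $n\ge 5$ (where the hypothesis $\int_{|x|>r}|x|^{4-2n}V=\infty$ of Theorem~\ref{main2} fails, as you correctly noted) it argues by contradiction with Kato's inequality: the truncated logarithm $\psi=\phi\circ w$ is bounded, satisfies $-\Delta\psi\ge c^2/|x|^4$ near $0$, and comparison with the explicit superharmonic profile $c^2/((2n-8)|x|^2)$ forces $\psi$ to be unbounded, a contradiction. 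Your proof instead works directly at infinity with the spherical average and the classical Bessel ODE: since $V\equiv c^2$ is constant, the degree-zero spherical mean $w(r)$ solves $w''+\tfrac{n-1}{r}w'+c^2w=0$, the regularity of $u$ at the origin eliminates the $Y_\nu$ branch, and the oscillation of $J_\nu$ forces sign changes of $w$, hence of $u$, at arbitrarily large radii. Each route buys something: your Bessel argument is more elementary and uniform in $n$, avoids Kato's inequality and even avoids using the boundedness of $u$ at infinity (only continuity at the origin and the sign hypothesis near infinity are used), but it is genuinely tied to the constancy of $V$; the paper's Kelvin--Kato route is more robust and is designed to mesh with Theorem~\ref{ca} and Theorem~\ref{main2}, which handle a wide class of potentials $W$ near the singularity. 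One small point: your elimination of $Y_\nu$ via unboundedness only covers $n\ge 2$; for $n=1$ one has $\nu=-\tfrac12$ and $r^{1/2}Y_{-1/2}(cr)$ is in fact bounded, so you should either use evenness of $w$ at $0$ there, or simply observe that both $\cos(cr)$ and $\sin(cr)$ oscillate so the conclusion is immediate.
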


Equivalently, the above theorem states that every bounded nonconstant eigenfunction is neither subharmonic nor superharmonic near infinity. Theorem \ref{ei} (and Theorem \ref{ei2}) fails in general for   eigenfunctions with respect to negative eigenvalues, as indicated in the following example. 

\begin{example}
    Let   $u_0(z)$ be a holomorphic solution on $\mathbb C$ to the following modified Bessel equation 
$$ v''(z) +\frac{1}{z}v'(z)-v(z)=0 \ \ \text{on}\ \ \mathbb C  $$ such that its asymptotic behavior for $|\arg z|<\frac{\pi}{2}$ near $\infty$ is $u_0(z)\sim \frac{1}{\sqrt {2\pi}}z^{-\frac{1}{2}}e^{-z}$. See, for instance, \cite[pp. 183]{Le}. Then $u_0(r)$ is a radial solution to 
$$ -\Delta u = - u\ \ \text{on}\ \ \mathbb R^2.$$
Clearly, $u_0$ is a bounded eigenfunction with respect to eigenvalue $-1$, and it does not change sign near $\infty$.  
\end{example}

%Moreover, the proof of Theorem \ref{main2} can be used to obtain the following Liouville type theorem without the sign assumption on the potential or the weak solution.

 \medskip
 
\noindent{\bf Acknowledgement:}    The authors would like to thank the referees for very helpful suggestions.

%\begin{theorem}\label{main3}
% Let $V\in L^\infty_{loc}(\mathbb R^n)$ such that $\int_{|x|>r} \frac{1}{|x|^{n+2}V}<\infty$ and  $\int_{|x|>r} |x|^{4-2n}V =\infty$ for some $r>>1$. Then every   bounded weak solution to  $$   -\Delta u = V u\ \ \text{on}\ \ \mathbb R^n$$     that is subharmonic or superharmonic near $\infty$ must be identically zero.
  %  \end{theorem}

\section{Preliminaries}

As mentioned in the introduction, when pulling  the equation from infinity  to the origin  via the Kelvin transform, the procedure results in a new equation that carries singularity at the origin. The purpose of this section is to  extend the new equation across the origin, and meanwhile resolve the singularity of weak solutions there. We shall also discuss  properties of flat functions.     

 Denote by $B_r$ the  ball of radius $r$ centered at $0$ in $\mathbb R^n$.  % We  first use the Kelvin transform to pull   infinity to the origin. 
 Given $u $ on $\mathbb R^n\setminus B_1$, recall that the Kelvin transform of $u$ is defined by $ w(x): =\frac{1}{|x|^{n-2}}u\left(\frac{x}{|x|^2}\right)$ on $B_1\setminus \{0\}$. One of the properties for the Kelvin transform is that in the  distribution sense,
 \begin{equation}\label{ll}
     \Delta w(x) = \frac{1}{|x|^{n+2}}\Delta u\left(\frac{x}{|x|^2}\right)\ \ \text{on}\ \ B_1\setminus \{0\}. 
 \end{equation} 
 In particular, the harmonicity is invariant under the Kelvin transform, a property that we will apply repeatedly.  
%The straightforward computation shows that the Jacobian of the Kelvin transform is  $|x|^{-2n}$.

\begin{lem}\label{be}
    Let $V\in L_{loc}^{\infty}(\mathbb R^n\setminus B_1)$, and $u\in L^2_{loc}(\mathbb R^n\setminus B_1)$.  Define  \begin{equation}\label{e16}
        w(x): = \frac{1}{|x|^{n-2}}u\left(\frac{x}{|x|^2}\right)\ \ \text{ and} \ \ W(x): = \frac{1}{|x|^{4}}V\left(\frac{x}{|x|^2}\right)\ \ \text{on}\ \ B_1\setminus \{0\}. 
    \end{equation}
    Then the following properties hold. \\
  (1)   $W\in L_{loc}^{\infty}(B_1\setminus \{0\})$, and $w\in L_{loc}^2(B_1\setminus \{0\})$.\\
   (2)  If    $  V = \BigO{ |x|^M} $
     near $\infty$   for some constant $M$, then $W = \BigO{\frac{1}{|x|^{M+4}}}$ near $0$.\\
  (3)  For every $r>0$, one has $$\int_{|x|>\frac{1}{r}} \frac{1}{|x|^{n+2}V} = \int_{|x|<r} \frac{1}{|x|^{n+2}W } \ \ \text{and}\ \ \int_{|x|> \frac{1}{r} } |x|^{4-2n}V = \int_{|x|<r} W.$$
\\
    (4)  If      $$\lim_{r\rightarrow \infty}r^m\int_{|x|>r} |u|^2 = 0$$ for some constant $m\ge 0$, then $w\in L_{loc}^2(B_1)$ with
   $$\lim_{r\rightarrow 0} r^{-(m+4)}\int_{|x|<r} |w|^2=0.$$  \\
    (5)  If $u$ is a weak solution to 
    \begin{equation*}
        -\Delta u = V u\ \ \text{on}\ \ \mathbb R^n\setminus B_1,\ \  
    \end{equation*}
then $w$ is a weak solution to
     \begin{equation*}
    \begin{split}
       - \Delta w  = W  w\ \ \text{on}\ \ B_1\setminus \{0\}.
    \end{split}
\end{equation*}

\end{lem}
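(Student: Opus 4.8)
The plan is to verify the five assertions essentially by direct computation, exploiting the fact that the inversion map $x\mapsto x/|x|^2$ is a diffeomorphism of $B_1\setminus\{0\}$ onto $\mathbb R^n\setminus \overline{B_1}$ with Jacobian $|x|^{-2n}$, and tracking how the various weights transform under it. For (1), since $V\in L^\infty_{loc}(\mathbb R^n\setminus B_1)$, on any compact $K\subset B_1\setminus\{0\}$ the image $K^* := \{x/|x|^2 : x\in K\}$ is a compact subset of $\mathbb R^n\setminus B_1$, so $V$ is bounded on $K^*$ and the weight $|x|^{-4}$ is bounded on $K$; hence $W\in L^\infty_{loc}(B_1\setminus\{0\})$. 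For $w$, I would change variables: $\int_K |w|^2\,dx = \int_K |x|^{-2(n-2)}|u(x/|x|^2)|^2\,dx = \int_{K^*} |y|^{-2n+4}\cdot|y|^{2n}\cdot|y|^{-2n}\,|u(y)|^2\,dy$ after substituting $y=x/|x|^2$ (so $|x|=|y|^{-1}$ and $dx = |y|^{-2n}\,dy$), which is finite since $u\in L^2_{loc}$ there. Assertion (2) is immediate from the definition of $W$ in \eqref{e16}: if $|V(z)|\le C|z|^M$ for $|z|$ large, then for $|x|$ small, $|W(x)| = |x|^{-4}|V(x/|x|^2)| \le C|x|^{-4}|x|^{-M} = C|x|^{-(M+4)}$.

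For (3), both identities are the same change of variables $y = x/|x|^2$. In the first, the integrand $|x|^{-(n+2)}W(x)^{-1} = |x|^{-(n+2)}\cdot|x|^4\cdot V(x/|x|^2)^{-1} = |x|^{2-n}V(y)^{-1}$, and with $dx = |y|^{-2n}dy$, $|x|^{2-n} = |y|^{n-2}$, this becomes $\int |y|^{n-2}|y|^{-2n}V(y)^{-1}\,dy = \int |y|^{-(n+2)}V(y)^{-1}\,dy$ over $|y|>1/r$, matching the left side. The second identity is analogous: $W(x) = |x|^{-4}V(y)$, $dx = |y|^{-2n}dy$, giving $\int |x|^{-4}V(y)|y|^{-2n}dy = \int |y|^{4}|y|^{-2n}V(y)\,dy = \int |y|^{4-2n}V(y)\,dy$. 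Assertion (4) is again the same computation as the $L^2$ estimate in (1): $\int_{|x|<r}|w|^2\,dx = \int_{|y|>1/r}|y|^{2n-4}|y|^{-2n}|u(y)|^2\,dy = \int_{|y|>1/r}|y|^{-4}|u(y)|^2\,dy$; bounding $|y|^{-4}\le 1$ when $|y|>1/r$ and $r$ small shows $w\in L^2_{loc}(B_1)$, and then $r^{-(m+4)}\int_{|x|<r}|w|^2 \le r^{-(m+4)}\int_{|y|>1/r}|y|^{-4}|u|^2\le \rho^{m}\int_{|y|>\rho}|u|^2$ with $\rho = 1/r\to\infty$, using $r^{-(m+4)} = \rho^{m+4}$ and $|y|^{-4}\le \rho^{-4}$... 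I should be slightly careful with the exponent bookkeeping here, replacing the crude bound $|y|^{-4}\le 1$ by $|y|^{-4}\le\rho^{-4}$ to land exactly on $\rho^m\int_{|y|>\rho}|u|^2\to 0$; this is the one spot where I must get the powers right.

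Assertion (5) is where the real content lies, and I expect the identity \eqref{ll} to do the heavy lifting. The strategy is: in the distributional sense on $B_1\setminus\{0\}$, $-\Delta w(x) = -|x|^{-(n+2)}(\Delta u)(x/|x|^2)$ by \eqref{ll}; since $u$ solves $-\Delta u = Vu$ on $\mathbb R^n\setminus B_1$, we have $(\Delta u)(y) = -V(y)u(y)$ there, so $-\Delta w(x) = |x|^{-(n+2)}V(x/|x|^2)u(x/|x|^2) = |x|^{-4}V(x/|x|^2)\cdot|x|^{-(n-2)}u(x/|x|^2) = W(x)w(x)$, which is the claim. The main obstacle — and the step I would be most careful about — is justifying \eqref{ll} and this manipulation at the level of weak/distributional solutions rather than classical ones: one must check that $Wu$-type products are locally integrable on $B_1\setminus\{0\}$ (which follows from (1)), that the Kelvin transform of a weak solution is a weak solution for the Laplacian part (a standard fact, since the Kelvin transform is conformal and $n$-harmonicity is conformally covariant in the stated weighted form — one tests against $\varphi\in C_c^\infty(B_1\setminus\{0\})$ and changes variables, pushing the test function forward to $C_c^\infty(\mathbb R^n\setminus B_1)$), and that the pointwise-a.e. identity $(\Delta u)(y) = -V(y)u(y)$ can legitimately be substituted inside the distributional pairing. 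Since \eqref{ll} is quoted as known and the change of variables is routine, I would present (5) as: test $w$ against $\varphi\in C_c^\infty(B_1\setminus\{0\})$, substitute $y = x/|x|^2$ to convert $\int w\Delta\varphi$ into an integral over $\mathbb R^n\setminus B_1$ of $u$ against (the transform of) $\Delta\varphi$, recognize the latter via the Kelvin-transform intertwining as $\int u\,\Delta\psi$ for a suitable $\psi\in C_c^\infty(\mathbb R^n\setminus B_1)$, apply the weak equation for $u$, and transform back.
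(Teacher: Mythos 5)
Your proposal follows essentially the same route as the paper: direct change of variables under the inversion $x\mapsto x/|x|^2$ with Jacobian $|x|^{-2n}$ for parts (1)--(4), and invoking \eqref{ll} together with the pointwise substitution of the weak equation for part (5); the exponent bookkeeping in (3), (4), and (5) all comes out correctly. There is a small arithmetic slip in your verification of (1) — the weight $|x|^{-2(n-2)}\,dx$ transforms to $|y|^{2n-4}\cdot|y|^{-2n}\,dy = |y|^{-4}\,dy$, not $|y|^{4-2n}\,dy$ as your intermediate expression suggests — but this is harmless for the local integrability conclusion on compacta away from the origin, and you compute the correct exponent $|y|^{-4}$ when it actually matters in (4).
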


\begin{proof}
{(1)} and  {(2)} are true by definition. For (3), we apply change of coordinates $x\rightarrow \frac{x}{|x|^2}$  to verify the equalities directly as follows.
\begin{equation*}
    \begin{split}
     & \int_{|x|<r} \frac{1}{|x|^{n+2}W }  =  \int_{|x|<r} \frac{1}{|x|^{n-2}V\left(\frac{x}{|x|^2}\right) }  =  \int_{|x|>\frac{1}{r}} \frac{|x|^{n-2}}{V(x) }|x|^{-2n} =   \int_{|x|>\frac{1}{r}} \frac{1}{|x|^{n+2}V};  \\
     & \int_{|x|<r} W  =  \int_{|x|<r} \frac{1}{|x|^{4}}V\left(\frac{x}{|x|^2}\right)   =  \int_{|x|>\frac{1}{r}} |x|^{4}V(x) |x|^{-2n} = \int_{|x|> \frac{1}{r} } |x|^{4-2n}V. 
    \end{split}
\end{equation*}
Here we  used the fact that  the Jacobian of the coordinates change  is    $|x|^{-2n}$.

For the decay property of $w$ in {(4)}, by the same change of coordinates as above,  
\begin{equation*}
    \begin{split}
      \int_{|x|<r} |w|^2 =&\int_{|x|<r} \frac{1}{|x|^{2n-4}}\left|u\left(\frac{x}{|x|^2}\right)\right|^2  =  \int_{|x|>\frac{1}{r}} |x|^{2n-4}\left|u(x)\right|^2 |x|^{-2n}\\
      =&\int_{|x|>\frac{1}{r}} |x|^{-4}\left|u(x)\right|^2 \le r^{4} \int_{|x|>\frac{1}{r}}\left|u\right|^2 .  
    \end{split}
\end{equation*} 
Note that by assumption, $\lim_{r\rightarrow 0}r^{-m}\int_{|x|>\frac{1}{r}} |u|^2 =\lim_{r\rightarrow \infty}r^m\int_{|x|>r} |u|^2 = 0$. Hence  
$$ \lim_{r\rightarrow 0} r^{-(m+4)}\int_{|x|<r} |w|^2   \le \lim_{r\rightarrow 0} r^{-m-4} r^4\int_{|x|>\frac{1}{r}}\left|u\right|^2 =0.$$
%Thus $w\in L^2_{loc}(B_1)$ and  vanishes to infinite order in the $L^2$ sense at $0$. 

 {(5)} follows from \eqref{ll} by a straightforward computation: on $  B_1\setminus \{0\}$,
 \begin{equation*}
    \begin{split}
       - \Delta w(x) = -\frac{1}{|x|^{n+2}}\Delta u\left(\frac{x}{|x|^2}\right) = \frac{1}{|x|^{n+2}}V\left(\frac{x}{|x|^2}\right)u\left(\frac{x}{|x|^2}\right) = \frac{1}{|x|^{4}}V\left(\frac{x }{|x|^2}\right)w(x)= W(x) w(x)
    \end{split}
\end{equation*}
in the distribution sense.

\end{proof}

Next we    extend the weak solutions across the singularity at the origin, with the help of a Harvey-Polking type (see \cite{HP})   lemma below for an isolated singularity. Throughout the rest of the paper we use the following notation:  two quantities $A$ and $B$ are said to satisfy $A\lesssim B$, if  $A\le CB$ for some constant $C>0$ that depends only possibly on $n$.  
 
\begin{lem}\label{HP}
  Let $P(x,D)$ be a linear differential operator of order $l$ and $f\in L^1(B_1)$. If $u\in L^1(B_1)$ is a weak solution to $P(x, D) u = f$ on $B_1\setminus \{0\}$  and \begin{equation}\label{fv}
    \lim_{r\rightarrow 0} r^{-l}\int_{|x|<r}|u |    =0, 
\end{equation}  then $u$ is a weak solution to
$P(x, D) u = f$ on $B_1$.

\end{lem}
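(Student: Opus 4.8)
The plan is to show that the distribution $T := P(x,D)u - f$, which is supported at the origin, must in fact vanish. Since $u \in L^1(B_1)$ and $f \in L^1(B_1)$, both $P(x,D)u$ and $f$ define distributions on $B_1$, and the hypothesis says $P(x,D)u = f$ on $B_1 \setminus \{0\}$, i.e.\ $T$ is a distribution supported in $\{0\}$. A standard structure theorem then tells us $T = \sum_{|\alpha| \le N} c_\alpha \partial^\alpha \delta_0$ for some finite $N$ and constants $c_\alpha$. The goal is to prove all $c_\alpha = 0$ by testing $T$ against a carefully chosen family of rescaled bump functions and using the vanishing condition \eqref{fv}.

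First I would fix $\varphi \in C_c^\infty(B_1)$ with $\varphi \equiv 1$ near $0$, and for small $\rho > 0$ set $\varphi_\rho(x) := \varphi(x/\rho)$. Then I would estimate $\langle T, \varphi_\rho \rangle$ in two ways. On one hand, $\langle P(x,D)u, \varphi_\rho\rangle = \langle u, P^*(x,D)\varphi_\rho\rangle$ where $P^*$ is the formal adjoint (of order $l$); since each derivative hitting $\varphi_\rho$ costs a factor $\rho^{-1}$ and the integration is over $|x| \lesssim \rho$, we get $|\langle u, P^*\varphi_\rho\rangle| \lesssim \rho^{-l}\int_{|x| < C\rho} |u| \to 0$ by \eqref{fv} (here one should be a little careful that lower-order terms of $P^*$, with fewer derivatives on $\varphi_\rho$, are even better behaved, and that the coefficients of $P$ may need local boundedness — but this is routine and one can absorb it, or simply note the worst term dominates). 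Also $|\langle f, \varphi_\rho\rangle| \le \int_{|x| < C\rho}|f| \to 0$ since $f \in L^1$. Hence $\langle T, \varphi_\rho\rangle \to 0$. On the other hand, $\langle T, \varphi_\rho\rangle = \sum_{|\alpha|\le N} c_\alpha (-1)^{|\alpha|} \partial^\alpha\varphi_\rho(0) = c_0 \varphi(0) = c_0$ for $\rho$ small enough that $\varphi_\rho \equiv 1$ near $0$ (all higher derivatives of $\varphi$ vanish at $0$). Therefore $c_0 = 0$.

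To kill the remaining coefficients, I would iterate this with test functions that pick out higher-order derivatives: for a multi-index $\beta$, test against $\psi_{\rho,\beta}(x) := x^\beta \varphi_\rho(x)$. Expanding $\langle T, \psi_{\rho,\beta}\rangle$ via Leibniz gives $\sum_\alpha c_\alpha (-1)^{|\alpha|}\partial^\alpha(x^\beta\varphi_\rho)(0)$, and since $\partial^\alpha(x^\beta)(0) = \beta!\,\delta_{\alpha\beta}$ while derivatives of $\varphi_\rho$ beyond order $0$ vanish at $0$, this collapses to $\pm\,c_\beta\,\beta!$. Meanwhile the same adjoint estimate gives $|\langle T,\psi_{\rho,\beta}\rangle| \lesssim \rho^{-l}\int_{|x|<C\rho}|x^\beta|\,|u| + \int_{|x|<C\rho}|x^\beta||f| \lesssim \rho^{|\beta|-l}\int_{|x|<C\rho}|u| + \rho^{|\beta|}\int_{|x|<C\rho}|f|$, which tends to $0$ as $\rho \to 0$: when $|\beta| \ge l$ the first term has a nonnegative power of $\rho$ times an $L^1$-integral over a shrinking set, hence $\to 0$, and when $|\beta| < l$ we still have $\rho^{|\beta|-l}\int_{|x|<C\rho}|u| \le \rho^{-l}\int_{|x|<C\rho}|u| \to 0$ directly from \eqref{fv}. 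So $c_\beta = 0$ for all $\beta$ with $|\beta| \le N$, hence $T = 0$ and $P(x,D)u = f$ on all of $B_1$.

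The main obstacle, and the only place requiring genuine care, is the adjoint estimate $|\langle u, P^*(x,D)(x^\beta\varphi_\rho)\rangle| \lesssim \rho^{|\beta|-l}\int_{|x|<C\rho}|u|$: one must track how the variable coefficients of $P$ (and the resulting coefficients of $P^*$, which involve their derivatives) interact with the rescaling, and make sure no factor of $\rho$ is lost. The cleanest route is to observe that $P^*$ has order $l$, so the top-order part contributes $\lesssim \rho^{|\beta|-l}$ from the chain rule, while every lower-order term contributes a strictly larger power of $\rho$ and is therefore negligible; local boundedness of the coefficients near $0$ (which is automatic for the applications, where $P$ is essentially $\Delta$ with the potential moved to the right side) handles the rest. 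Everything else is bookkeeping with the structure theorem for distributions supported at a point.
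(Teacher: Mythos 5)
Your proof is correct, but it takes a genuinely different route from the paper's. You first invoke the structure theorem for distributions supported at a point (so $T := P(x,D)u - f = \sum_{|\alpha|\le N} c_\alpha \partial^\alpha\delta_0$, with $N$ finite since $T$ has order at most $l$), and then kill each coefficient $c_\beta$ by pairing against the special family $x^\beta\varphi(x/\rho)$ and letting $\rho \to 0$. The paper does not pass through the structure theorem at all: it takes an \emph{arbitrary} test function $\phi$, introduces a cutoff $\phi^r$ equal to $1$ on $B_{r/2}$ and supported in $B_r$, notes that $(1-\phi^r)\phi$ is a legitimate test function on $B_1\setminus\{0\}$ so $\langle T,(1-\phi^r)\phi\rangle = 0$, and hence $\langle T,\phi\rangle = \langle T,\phi^r\phi\rangle = \langle u,\,{}^tP(\phi^r\phi)\rangle - \langle f,\phi^r\phi\rangle$, which tends to $0$ by the scaling estimate $|{}^tP(\phi^r\phi)|\lesssim r^{-l}$ on $B_r$ together with \eqref{fv} and $f\in L^1$. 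The core scaling estimate is the same in both proofs, but the paper's argument is shorter and avoids the extra machinery; yours makes more explicit why the exponent $l$ in \eqref{fv} is exactly what is needed (it is the worst case $\beta = 0$, and for $|\beta|\ge 1$ the decay is even easier). One small caveat in your write-up: you should state explicitly that $T$ has finite order (which follows because $u,f\in L^1$ and $P$ has order $l$, so $T$ has order $\le l$ on compact sets), since the structure theorem requires this; and you implicitly assume the coefficients of $P$ are smooth (or at least $C^l$) near the origin so that ${}^tP$ makes sense and has locally bounded coefficients — the paper's phrasing of the lemma carries the same implicit assumption.
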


\begin{proof}
    Let $\phi^r$ be a smooth function on $B_1$ such that $\phi^r = 1$ on $B_{\frac{r}{2}}$,  $\phi^r = 0$ outside $B_{r}$ and $|\nabla^k \phi^r|\lesssim r^{-k}$ on $B_r$, $k\le l$. Then for any testing function $\phi$ on $B_1$, $(1-\phi^r)\phi$ is a testing function on $B_1 \setminus \{0\} $. Thus 
    $$ \langle P(x,D) u -f , (1-\phi^r)\phi\rangle =0, $$
 and so
    \begin{equation*}
        \begin{split}
            \langle P(x,D) u -f , \phi\rangle =  \langle P(x,D) u -f , \phi^r\phi\rangle  
            =  \langle  u,  {}^tP(x,D)(\phi^r\phi)\rangle -  \langle  f , \phi^r\phi\rangle.
        \end{split}
    \end{equation*}
    Passing $r$ to $0$, since $f\in L^1(B_1)$, $$ \langle  f , \phi^r\phi\rangle \lesssim \int_{B_{r} }|f| \rightarrow 0.$$
  On the other hand, by assumption \eqref{fv}, $$\langle  u,  {}^tP(x,D)(\phi^r\phi)\rangle \lesssim r^{-l} \int_{|x|<r }|u|%\lesssim  \epsilon^{ \frac{n}{2}-m} \left(\int_{B_{\epsilon} }|u|^2\right)^{\frac{1}{2}}
    \rightarrow 0.$$   
  We thus have the desired identity $  \langle P(x,D) u -f , \phi\rangle =0.  $ 
  
\end{proof}

\begin{lem}\label{HP2}
    Let  $W\in L^{\infty}_{loc}(B_1\setminus\{0\})$, $W =\BigO{\frac{1}{|x|^M}}$  near $0$ for some constant $M\ge 0$, and  $w\in L^2_{loc}(B_1)$ be a weak solution to 
    \begin{equation*}
       - \Delta w = W w\ \ \text{on}\ \ B_1\setminus \{0\}.
    \end{equation*}
If   further \begin{equation}\label{e19}
   \lim_{r\rightarrow 0} r^{-2M-3}\int_{|x|<r} |w|^2=0,
\end{equation}  
  then $w\in W_{loc}^{2,2}(B_1)\cap W_{loc}^{2,p}(B_1\setminus \{0\})$ for all $p< \infty$, and $w$ is a weak solution to  $$-\Delta w = W w \ \ \text{on}\ \   B_1.$$ 
\end{lem}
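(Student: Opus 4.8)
The plan is to bootstrap regularity starting from the bare assumption $w\in L^2_{loc}(B_1)$ and the control \eqref{e19}, then invoke Lemma \ref{HP} to extend the equation across the origin. First I would observe that since $w\in L^2_{loc}(B_1)$ and $W=O(|x|^{-M})$ near $0$, the right-hand side $Ww$ lies in $L^1_{loc}(B_1\setminus\{0\})$ but possibly not in $L^1_{loc}(B_1)$; the singularity at the origin must be controlled quantitatively. So I would work on dyadic annuli $A_r=\{r<|x|<2r\}$ and use interior elliptic estimates for $-\Delta w = Ww$ on a slightly larger annulus. On $A_r$ we have $\|W\|_{L^\infty(A_r)}\lesssim r^{-M}$, and by the Caccioppoli / interior $L^2$ estimate followed by $W^{2,2}$ interior estimates (rescaled to the annulus of size $r$), one gets $\|w\|_{W^{2,2}(A_r)}\lesssim (1+r^{-M-2})\, r^{-2}\|w\|_{L^2(A_{2r})}$ or similar, with the powers of $r$ tracked carefully from scaling. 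Iterating the Sobolev embedding and elliptic regularity on the annulus (since away from the origin $W$ is locally bounded, hence $Ww\in L^p$ for all finite $p$ by bootstrapping $w\in L^p$), one obtains $w\in W^{2,p}_{loc}(B_1\setminus\{0\})$ for all $p<\infty$; this is the easy half of the conclusion and uses only $W\in L^\infty_{loc}(B_1\setminus\{0\})$.

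Next I would quantify the growth of $w$ and $\nabla w$ near $0$. From \eqref{e19} and the annular elliptic estimate above, $\int_{A_r}|w|^2\lesssim r^{2M+3}o(1)$ and, absorbing two powers of $r^{-1}$ from the second-order estimate and $r^{-M}$ from $W$, one deduces pointwise-in-scale bounds like $\int_{A_r}|\nabla^2 w|^2 \lesssim r^{-2M-4}\int_{A_{2r}}|w|^2 = r^{-1}o(1)$. Summing over the dyadic scales $r=2^{-j}$ (a geometric-type series because each term is $o(2^{j})\cdot 2^{-2j}$-controlled) shows $\nabla^2 w\in L^2(B_1)$, hence $w\in W^{2,2}_{loc}(B_1)$ — note this is where I must be careful that the exponent $-2M-3$ in \eqref{e19}, not $-2M-4$, is exactly what makes the series converge after the loss of one power from the Caccioppoli step and accounts for the measure $|A_r|\sim r^n$. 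Simultaneously, $\int_{A_r}|w|\lesssim r^{n/2}\big(\int_{A_r}|w|^2\big)^{1/2} = o(r^{(n+2M+3)/2+n/2})$, which for $M\ge 0$ decays faster than $r^2$, giving $\lim_{r\to0} r^{-2}\int_{|x|<r}|w|=0$; and $Ww\in L^1(B_1)$ follows from Hölder since $\int_{A_r}|W||w|\lesssim r^{-M}|A_r|^{1/2}\|w\|_{L^2(A_r)} = o(r^{n/2+3/2})$, summable.

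Finally, with $f:=Ww\in L^1(B_1)$, $w\in L^1(B_1)$, $-\Delta w = f$ on $B_1\setminus\{0\}$, and $\lim_{r\to0}r^{-2}\int_{|x|<r}|w|=0$, I apply Lemma \ref{HP} with $P(x,D)=-\Delta$, $l=2$, to conclude $-\Delta w = Ww$ on all of $B_1$ in the distribution sense; combined with the already-established $w\in W^{2,2}_{loc}(B_1)$ this says $w$ is a weak solution on $B_1$. The main obstacle I anticipate is the bookkeeping in the second paragraph: getting the scaling exponents right so that the dyadic sum defining $\int_{B_1}|\nabla^2 w|^2$ converges, which forces one to see precisely why the threshold in \eqref{e19} is $2M+3$ (the $-3$ rather than $-2$ buying the extra decay needed after one loses a power $r^{-2}$ in the second-derivative estimate and after accounting for the $r^n$ annulus volume and the $r^{-M}$ from $W$). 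A secondary point to handle cleanly is that the interior elliptic estimates must be applied on annuli and rescaled, rather than on balls, since the equation is only known on the punctured ball before Lemma \ref{HP} is invoked.
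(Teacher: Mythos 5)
There is a genuine gap in your second paragraph, precisely where you anticipated trouble. After the annular Caccioppoli/$W^{2,2}$ estimate and \eqref{e19}, what you actually get is $\int_{A_{2^{-j}}}|\nabla^2 w|^2 \lesssim 2^{j}\varepsilon_j$ with $\varepsilon_j\to 0$ (and incidentally the exponent should be $r^{-\max\{2M,4\}}$ rather than $r^{-2M-4}$, since the contributions $\|Ww\|_{L^2}$ and $r^{-2}\|w\|_{L^2}$ add rather than multiply — but this does not save the argument when $M\le\nicefrac12$). The parenthetical assertion that ``each term is $o(2^{j})\cdot 2^{-2j}$-controlled'' is not justified: \eqref{e19} provides no quantitative decay for $\varepsilon_j$, so $\sum_j 2^{j}\varepsilon_j$ can diverge (take $\varepsilon_j=1/j$). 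Even with the sharper estimate one gets $\int_{A_{2^{-j}}}|\nabla^2w|^2\lesssim 2^{j(1-2M)}\varepsilon_j$, which need not be summable for small $M\ge 0$. There is also a secondary, fixable issue: even if $\nabla^2 w\in L^2(B_1)$ as a function, you must still argue that it coincides with the distributional Hessian across the origin before concluding $w\in W^{2,2}_{loc}(B_1)$.

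The paper avoids this entirely by reversing the logical order, and the correct target of the dyadic estimate is $Ww$, not $\nabla^2 w$. One first shows $Ww\in L^2_{loc}(B_1)$: on $\{2^{-j-1}r<|x|<2^{-j}r\}$ one has $|W|^2\lesssim 2^{2(j+1)M}r^{-2M}$, and with $\int_{|x|<2^{-j}r}|w|^2\lesssim (2^{-j}r)^{2M+3}$ from \eqref{e19} the $j$-th term is $\lesssim 2^{-3j}r^3$; the exponent $2M+3$ is tuned precisely so that the powers of $2^{jM}$ cancel and a convergent geometric series $\sum 2^{-3j}$ survives. Together with your (correct) derivation of $\lim_{r\to 0}r^{-2}\int_{|x|<r}|w|=0$, Lemma~\ref{HP} with $l=2$ and $f=Ww\in L^1$ extends the equation to all of $B_1$. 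Only then does one invoke interior elliptic regularity on the full ball with source $Ww\in L^2_{loc}(B_1)$, which yields $w\in W^{2,2}_{loc}(B_1)$ without any annular bookkeeping of Hessian norms, and the $W^{2,p}_{loc}(B_1\setminus\{0\})$ bootstrap comes last via Sobolev embedding. In short, you have the right pieces but in the wrong order: prove $Ww\in L^2_{loc}$ first, remove the singularity, and only then ask for $W^{2,2}$ regularity.
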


\begin{proof}
First,   for all  $r<<1$,   by \eqref{e19}
 \begin{equation}\label{e20}
     \int_{|x|<r}|Ww|^2 \lesssim  \int_{|x|<r}\frac{|  w|^2}{|x|^{2M}} = \sum_{j= 0}^\infty\int_{2^{-j-1}r<|x|<2^{-j}r}    \frac{|  w|^2}{|x|^{2M}} \le   \sum_{j= 0}^\infty 2^{2(j+1)M}r ^{-2M}\int_{ |x|<2^{-j}r}     |  w|^2\le C r^3  
 \end{equation}
 for some constant $C$ dependent only on $M$.    So $Ww\in L_{loc}^2(B_1)$.
 On the other hand, by H\"older inequality and    \eqref{e19} again,  $$ \lim_{r\rightarrow 0} r^{-2}\int_{|x|<r}|w |  \le  \lim_{r\rightarrow 0} r^{ -2 +\frac{n}{2}}\left(\int_{|x|<r} |w|^2\right)^\frac{1}{2}  \le \lim_{r\rightarrow 0}  \left(r^{-3}\int_{|x|<r} |w|^2\right)^\frac{1}{2}  =0.$$
   As a consequence of the Harvey-Polking type  Lemma \ref{HP}, $-\Delta w = W w $ holds true on $B_1$ in  the distribution sense. PDE theory then tells that $w\in W_{loc}^{2,2}(B_1)$. In particular, $w\in L_{loc}^{p_0}(B_1)$ where $p_0 = \frac{2n}{n-4}$ if $n>4$, or any number less than $\infty$ if $n\le 4$ by the Sobolev embedding theorem. 
 
  Restricted on $B_1\setminus \{0\}$, since  $Ww\in L_{loc}^{p_0}(B_1\setminus \{0\})$, we further have $w\in W^{2, p_0}_{loc}(B_1\setminus\{0\}). $ With a boot-strap argument, we eventually obtain $w\in   W_{loc}^{2,p}(B_1\setminus \{0\})$ for all $p<\infty$.

% Denote by $E$ the fundamental solution of $-\Delta$. Now that $Ww\in L_{loc}^1(B_1)$, we have $E*(Ww)\in L_{loc}^p(B_1)$ for  $p<\frac{n}{n-2}$ by Young's inequality. Since $w- E*(Ww)$ is harmonic on $B_1$,   $w\in L_{loc}^p(B_1)$ for  $p<\frac{n}{n-2}$ as well. Thus $Ww\in L_{loc}^p(B_1\setminus\{0\})$ for $p<\frac{n}{n-2}$. Together with the fact that $Ww\in L^\infty $ near $0$, we further have   $w\in W^{2,p}_{loc}(B_1)$ for $p<\frac{n}{n-2}$ by classical PDE theory. Utilizing the Sobolev embedding theorem and a boot-strap argument,  one finally infers $w\in W^{2,p}_{loc}(B_1)$ for all $p<\infty$.
 
\end{proof}

\begin{cor}\label{HPC}
      Let  $W\in L^{\infty}_{loc}(B_1\setminus\{0\})$, $W =\BigO{\frac{1}{|x|^M}}$  near $0$ for some constant $M$, and  $w\in L^2_{loc}(B_1)$ be a weak solution to 
    \begin{equation*}
       - \Delta w = W w\ \ \text{on}\ \ B_1\setminus \{0\}, 
    \end{equation*}
If      $w$ vanishes to infinite order in the $L^2$ sense at $0$, then $w\in W_{loc}^{2,2}(B_1)\cap W_{loc}^{2,p}(B_1\setminus \{0\})$ for all $p< \infty$, and $w$ is a weak solution to  $$-\Delta w = W w \ \ \text{on}\ \   B_1.$$ 
\end{cor}

 \begin{proof}
     The $M\ge 0$ case  follows directly from Lemma \ref{HP2}.  For $M<0$, we   can reduce to the $M=0$ case by noting that $W =\BigO{\frac{1}{|x|^M}}  =\BigO{\frac{1}{|x|^0}}$ near $0$. 
     
 \end{proof}
%\begin{remark}
 %   Lemma \ref{HP2} remains true if the assumption $W\in L^{\infty}_{loc}(B_1\setminus\{0\})$ is replaced by $W\in L^{\frac{n}{2}}_{loc}(B_1\setminus\{0\})$. Indeed, $Ww\in L_{loc}^{ \frac{2n}{4+n}}(B_1)$ by H\"older inequality:   
  %  $$\int_{|x|<r}|Ww|^\frac{2n}{4+n}  \lesssim  \left(\int_{|x|<r}|W|^\frac{n}{2}\right)^{\frac{4}{4+n}}\left(\int_{|x|<r}|w|^2\right)^{\frac{n}{4+n}}. ??? $$
%\end{remark}
%\medskip

Next, we discuss properties of flat functions at $0$. A function $w\in L^2_{loc}(\mathbb R^n)$ is said to vanish to infinite order (or, flat) in the $L^2 $ sense at $x_0\in \mathbb R^n$  if for every $m\ge 0$,
 \begin{equation}\label{e25}
      \lim_{r\rightarrow 0} r^{-m}\int_{|x-x_0|<r}|w |^2    =0.
 \end{equation}
In view of (the proof to) Lemma \ref{be} {(4)}, it makes sense to call that  a function $u\in L^2_{loc}(\mathbb R^n)$ vanishes to infinite order in the $L^2 $ sense at infinity, if for   every $m\ge 0$, 
 $$\lim_{r\rightarrow \infty}r^m\int_{|x|>r} |u|^2 = 0. $$
%It is not hard to see the following facts:
 %\begin{remark}\label{re}
 %If $w$ vanishes to  infinite order geometrically at $0$, then\\
  %  a).  $w$ vanishes to  infinite order in $L^2$ sense at $0$;\\
   % b). as a function of $r$, $  \int_{|x| = r}|w|^2$  vanishes to infinite order  geometrically at $r=0$. 
 %\end{remark}
 Thus, as a consequence of Lemma \ref{be} {(4)}, if $u$ vanishes to infinite order in the $L^2 $ sense at infinity, then $w$ defined in \eqref{e16} vanishes to infinite order in the $L^2 $ sense at $0$.

   In general the   flatness of a function does not necessarily imply the flatness of its derivatives. For instance, $w (x):  = e^{-\frac{1}{x^2}}\cos (e^{\frac{1}{x^2}}), x\in \mathbb R$  is flat at $0$. However,  the derivative $w'$ is not even $L^1_{loc}$ at $0$. On the other hand,  the following lemma states that the flatness naturally passes onto its derivatives if $w$ is a weak solution to $-\Delta w = W w $ when $W$ blows up at most in a finite order at $0$.  We shall need the following well-known inequality  with proof provided below for completeness.

\begin{lem}
    Let $f\in W_{loc}^{2,2}(B_1)$. Then for any $0<r<\frac{1}{2}$,
\begin{equation}\label{in}
    \int_{|x|<r}|\nabla f|^2\lesssim \frac{1}{r^2}\int_{|x|<2r}|f|^2 + r^2 \int_{|x|<2r} |\Delta f|^2.
\end{equation}
\end{lem}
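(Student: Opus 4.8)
The inequality \eqref{in} is a Caccioppoli-type estimate, and the plan is to prove it by testing $|\nabla f|^2$ against the square of a cutoff function, integrating by parts, and absorbing the resulting gradient term back into the left-hand side. First I would fix a cutoff function $\eta=\eta_r\in C_c^\infty(\mathbb R^n)$, obtained by rescaling a fixed model cutoff, with $0\le\eta\le1$, $\eta\equiv 1$ on $B_r$, $\operatorname{supp}\eta\subset B_{2r}$, and $|\nabla\eta|\lesssim r^{-1}$. Since $2r<1$, the function $\eta^2 f$ belongs to $W^{1,2}(B_1)$ and has compact support in $B_1$; because $f\in W^{2,2}_{loc}(B_1)$, so that $\nabla f\in W^{1,2}_{loc}(B_1)$ and $\Delta f\in L^2_{loc}(B_1)$, integration by parts gives
\begin{equation*}
\int_{B_1}\eta^2|\nabla f|^2=\int_{B_1}\nabla f\cdot\nabla(\eta^2 f)-2\int_{B_1}\eta f\,\nabla\eta\cdot\nabla f=-\int_{B_1}\eta^2 f\,\Delta f-2\int_{B_1}\eta f\,\nabla\eta\cdot\nabla f.
\end{equation*}
This identity is rigorous for merely $W^{2,2}_{loc}$ data: one mollifies $f$ on a ball $B_\rho$ with $\operatorname{supp}\eta\subset B_\rho\Subset B_1$, applies the (smooth) identity, and passes to the limit using convergence in $W^{2,2}(B_\rho)$.

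Next I would estimate the two terms on the right by the Cauchy--Schwarz and Young inequalities, choosing the weights so that the gradient contribution becomes absorbable. For the first term, $\bigl|\int\eta^2 f\,\Delta f\bigr|\le\frac{1}{2r^2}\int\eta^2|f|^2+\frac{r^2}{2}\int\eta^2|\Delta f|^2$, and for the second, $\bigl|2\int\eta f\,\nabla\eta\cdot\nabla f\bigr|\le\frac14\int\eta^2|\nabla f|^2+4\int|f|^2|\nabla\eta|^2$. Substituting these back and moving $\frac14\int\eta^2|\nabla f|^2$ to the left yields
\begin{equation*}
\tfrac34\int_{B_1}\eta^2|\nabla f|^2\le\frac{1}{2r^2}\int_{B_1}\eta^2|f|^2+\frac{r^2}{2}\int_{B_1}\eta^2|\Delta f|^2+4\int_{B_1}|f|^2|\nabla\eta|^2.
\end{equation*}
Finally, using $\eta\equiv1$ on $B_r$ to bound the left-hand side from below by $\frac34\int_{|x|<r}|\nabla f|^2$, and using $\eta\le1$, $\operatorname{supp}\eta\subset B_{2r}$, $|\nabla\eta|^2\lesssim r^{-2}$ on the right-hand side, produces exactly \eqref{in}.

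The computation is elementary and I do not expect a genuine obstacle; the only point requiring a little care is the justification of the integration-by-parts identity for a function that is merely in $W^{2,2}_{loc}$ rather than smooth, which is why I would either invoke the standard mollification argument above, or, alternatively, prove \eqref{in} first for $f\in C^\infty$ with the same cutoff and Young inequality scheme and then extend to $W^{2,2}_{loc}(B_1)$ by density on compact subsets.
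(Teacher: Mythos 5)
Your proof is correct and follows the same underlying idea as the paper's: a Caccioppoli-type energy estimate obtained by introducing a cutoff supported in $B_{2r}$ with $|\nabla\eta|\lesssim r^{-1}$, integrating by parts, applying Young's inequality with weights tuned to produce the $r^{-2}$ and $r^2$ factors, and absorbing the $\eta^2|\nabla f|^2$ term back into the left-hand side. The only difference is computational: you test directly with $\eta^2 f$, which gives $\int\eta^2|\nabla f|^2=-\int\eta^2 f\,\Delta f-2\int\eta f\,\nabla\eta\cdot\nabla f$ in one step and requires only first-order bounds on the cutoff, whereas the paper writes $\phi\nabla f=\nabla(\phi f)-f\nabla\phi$, squares, applies Green's identity to $\int|\nabla(\phi f)|^2=-\int\phi f\,\Delta(\phi f)$, and then expands $\Delta(\phi f)$, which brings in $\Delta\phi$ and hence needs $|\nabla^2\phi|\lesssim r^{-2}$ as well. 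Your route is slightly more economical, but both land on the same three terms after Young's inequality, and your remarks on justifying the integration by parts for $f\in W^{2,2}_{loc}$ by mollification (or by density) are appropriate.
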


\begin{proof}
    Let $\phi$ be a nonnegative smooth function with compact support  on $B_{2r}$ such that $\phi =1$ on $B_r$, $\phi\le 1$ and $|\nabla^k \phi|\lesssim \frac{1}{r^k}, k\le 2$ on $B_{2r}$. Since $ \phi \nabla f= \nabla (\phi f) - f\nabla \phi, $
 we have 
\begin{equation*}
    \begin{split}
      \int_{|x|<2r} \phi^2 |\nabla  f|^2  \le 2\int_{|x|<2r} |f \nabla \phi |^2 + 2\int_{|x|<2r}| \nabla (\phi f)|^2.  
    \end{split}
\end{equation*}
Making use of Stokes' theorem to the second term on the right,  one further has
\begin{equation}\label{h2}
    \int_{|x|<2r} \phi^2 |\nabla  f|^2  \le 2\int_{|x|<2r} |f \nabla \phi |^2 + 2\int_{|x|<2r}\phi |f| |\triangle (\phi f)|.
\end{equation}

On the other hand, note that $ \triangle (\phi f) = f\triangle \phi + \phi\triangle f +2\nabla \phi\cdot \nabla f $. 
By the choice of $\phi$ and with a repeated application of the Schwartz inequality,
\begin{equation}\label{h1}
\begin{split}
     \int_{|x|<2r} \phi |f|| \triangle (\phi f) |\le   & \int_{|x|<2r} \phi |f|^2 | \triangle \phi|+  \int_{|x|<2r} \phi^2 |f| |\triangle  f| +  2\int_{|x|<2r} \phi |f||\nabla\phi ||\nabla f| \\
     \lesssim    & \frac{1}{r^2}\int_{|x|<2r} |f|^2+  \int_{|x|<2r}   \frac{1}{r}|f|\cdot r|\triangle  f| + \int_{|x|<2r} \phi |\nabla f|\cdot   \frac{1}{r}|f|  \\
 \lesssim & \frac{1}{r^2}\int_{|x|<2r} |f|^2 +  r^2\int_{|x|<2r}   |\triangle  f|^2 +   \frac{1}{4} \int_{|x|<2r}\phi^2|\nabla f|^2.
\end{split}
\end{equation}
Combining \eqref{h2}-\eqref{h1}, we have 
$$\int_{|x|<2r}\phi^2 |\nabla  f|^2\lesssim\frac{1}{r^2}\int_{|x|<2r} |f|^2 +  r^2\int_{|x|<2r}   |\triangle  f|^2,  $$
from which \eqref{in} follows.

\end{proof}

\begin{lem}\label{flatd}
 Suppose $w\in L^2_{loc}(B_1)$ vanishes to infinite order in the $L^2$ sense at $0$. Then the following holds. \\
  (1) If  $f =\BigO{\frac{1}{|x|^M}}  $   near $0$ for some constant $M$, then  $fw\in L^2$ near $0$, and  vanishes to infinite order in the $L^2$ sense at $0$.\\
   (2)  If $w$ is a weak solution to
    \begin{equation*}
        -\Delta w = W w\ \ \text{on}\ \ B_1.
    \end{equation*}
    for some $W\in L^{\infty}_{loc}(B_1\setminus\{0\})$ such that $W = \BigO{\frac{1}{|x|^M}}$  near $0$ for some constant $M$. Then 
   $\nabla w$  vanishes to infinite order in the $L^2$ sense at $0$. In particular,  $\nabla_\theta w$ and $ w_r$ with respect to   the polar coordinates $(r, \theta)$ vanish  to infinite order in the $L^2$ sense at $0$.
\end{lem}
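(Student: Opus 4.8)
The strategy is a short bootstrap of the infinite‑order vanishing: first from $w$ to $fw$ (part (1)) by a dyadic shell estimate, and then, in part (2), from $w$ and $Ww$ to $\Delta w$ and finally to $\nabla w$ via the interpolation inequality \eqref{in}.

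\textbf{Part (1).} The conclusion is immediate when $M<0$, since then $f$ is bounded near $0$ and $|fw|\lesssim|w|$, so assume $M\ge 0$. For $0<r<\tfrac12$ decompose the punctured ball into dyadic shells $A_j=\{2^{-j-1}r<|x|<2^{-j}r\}$ and use $|x|^{-2M}\le(2^{-j-1}r)^{-2M}$ on $A_j$:
\begin{equation*}
\int_{|x|<r}|fw|^2\ \lesssim\ \int_{|x|<r}\frac{|w|^2}{|x|^{2M}}\ =\ \sum_{j\ge 0}\int_{A_j}\frac{|w|^2}{|x|^{2M}}\ \le\ 2^{2M}r^{-2M}\sum_{j\ge 0}2^{2Mj}\int_{|x|<2^{-j}r}|w|^2 .
\end{equation*}
Given $m\ge 0$, apply the flatness of $w$ with the exponent $m'=m+2M+2$: writing $\eta(\rho):=\rho^{-m'}\int_{|x|<\rho}|w|^2$ one has $\eta(\rho)\to 0$ as $\rho\to 0$, and $\eta$ is bounded on $(0,\tfrac12]$ by continuity. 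Then $\sum_{j\ge 0}2^{2Mj}\int_{|x|<2^{-j}r}|w|^2\lesssim r^{m'}(\sup\eta)\sum_{j\ge 0}2^{-j(m+2)}\lesssim r^{m'}$, so that $r^{-m}\int_{|x|<r}|fw|^2\lesssim r^{m'-m-2M}=r^{2}\to 0$; the case $m=0$ shows in particular that $fw\in L^2$ near $0$. Hence $fw$ vanishes to infinite order in the $L^2$ sense at $0$.

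\textbf{Part (2).} By part (1) with $f=W$ we have $Ww\in L^2_{loc}$ near $0$, and away from $0$ this is clear; thus $-\Delta w=Ww\in L^2_{loc}(B_1)$ in the distribution sense. Elliptic regularity (or Lemma \ref{HP2}, whose hypothesis \eqref{e19} is the flatness of $w$ with $m=2M+3$) gives $w\in W^{2,2}_{loc}(B_1)$, so $\Delta w=-Ww$ a.e.\ and $\int_{|x|<2r}|\Delta w|^2=\int_{|x|<2r}|Ww|^2$. Fix $m\ge 0$ and apply \eqref{in} (valid for $0<r<\tfrac12$):
\begin{equation*}
\int_{|x|<r}|\nabla w|^2\ \lesssim\ \frac{1}{r^2}\int_{|x|<2r}|w|^2+r^2\int_{|x|<2r}|Ww|^2 .
\end{equation*}
The flatness of $w$ makes the first term $o(r^{-2}r^{m+2})=o(r^m)$, and part (1) applied to $Ww$ makes the second term $o(r^2r^m)=o(r^m)$; hence $r^{-m}\int_{|x|<r}|\nabla w|^2\to 0$ for every $m\ge 0$, i.e.\ $\nabla w$ vanishes to infinite order in the $L^2$ sense at $0$. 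Finally, in polar coordinates $|\nabla w|^2=|w_r|^2+r^{-2}|\nabla_\theta w|^2$, so for $|x|<1$ both $|w_r|$ and $|\nabla_\theta w|\le r|\nabla w|$ are pointwise dominated by $|\nabla w|$, and therefore $w_r$ and $\nabla_\theta w$ inherit the infinite‑order vanishing.

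The argument is a sequence of routine estimates; the two points deserving a little care are the convergence of the geometric series in part (1) — this is exactly where the extra powers of $r$ furnished by flatness are spent, which forces the choice $m'=m+2M+2$ — and the regularity step $w\in L^2_{loc}\Rightarrow w\in W^{2,2}_{loc}(B_1)$ in part (2), which legitimizes replacing $\Delta w$ by $-Ww$ inside \eqref{in}. Neither is a genuine obstacle.
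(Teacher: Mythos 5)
Your proof is correct and follows essentially the same route as the paper: part (1) spells out the dyadic‑shell estimate that the paper refers to by analogy with its display \eqref{e20}, and part (2) uses Lemma \ref{HP2} to get $w\in W^{2,2}_{loc}(B_1)$, then plugs the flatness of $w$ and $Ww$ into \eqref{in} and passes to $w_r,\nabla_\theta w$ via $|\nabla w|^2=|w_r|^2+r^{-2}|\nabla_\theta w|^2$, exactly as the paper does.
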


\begin{proof}
{(1)} can be proved by making use of a similar argument as in \eqref{e20}, with the assumption \eqref{e19}  replaced by the $L^2$ flatness \eqref{e25} of $w$ at $0$.

For {(2)}, since $w$ vanishes to infinite order in the $L^2$ sense at $0$,   $w\in W^{2,2}_{loc}(B_1)$ by Corollary \ref{HPC}. On the other hand,  by {(1)}  $Ww$ also vanishes to infinite order in the $L^2$ sense at $0$.  Thus  we apply \eqref{in} to $w$ and  obtain   the  flatness of $\nabla w$ in the $L^2$ sense. 

That   $\nabla_\theta w$ and $ w_r$   vanish  to infinite order in the $L^2$ sense at $0$  follows directly from the identity 
$$ |\nabla w|^2 = |w_r |^2 + \frac{1}{r^2} |\nabla_\theta v|^2 $$
in terms of the polar coordinates   $(r, \theta)$. 

\end{proof}

 \medskip

\begin{lem}\label{seq}
    Suppose that $w\in L^2(B_1)\cap C(B_1\setminus \{0\})$ satisfies 
    $$  \lim_{r\rightarrow 0} r^{-1} \int_{|x|< r} |w|^2=0.$$
    Then there exists $r_j\rightarrow 0$ such that 
    $$ \int_{|x| = r_j}|w|^2\rightarrow 0. \ \ $$
\end{lem}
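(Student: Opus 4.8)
The plan is to argue by contradiction using the integral representation in polar coordinates. Writing the $L^2$ norm over $B_1$ as an iterated integral, $\int_{|x|<r}|w|^2 = \int_0^r \left(\int_{|x|=\rho}|w|^2\right)\,d\rho$, set $g(\rho) := \int_{|x|=\rho}|w|^2$, which is a nonnegative continuous function of $\rho\in(0,1)$ since $w\in C(B_1\setminus\{0\})$. The hypothesis says $r^{-1}\int_0^r g(\rho)\,d\rho \to 0$ as $r\to 0$, i.e. the running average of $g$ near $0$ tends to $0$. I want to conclude that $g$ itself has a subsequence $g(r_j)\to 0$ with $r_j\to 0$.

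First I would suppose the conclusion fails: then there exist $\delta>0$ and $r_0>0$ such that $g(\rho)\ge \delta$ for all $\rho\in(0,r_0)$. But then for any $r<r_0$ we have $\int_0^r g(\rho)\,d\rho \ge \delta r$, so $r^{-1}\int_0^r g(\rho)\,d\rho \ge \delta > 0$, contradicting the assumed limit. Hence there is a sequence $r_j\to 0$ with $g(r_j)\to 0$, which is exactly the claim. One small point of care: the statement "the conclusion fails" should be unpacked as "there is no sequence $r_j\to 0$ with $g(r_j)\to 0$", which by a standard argument is equivalent to $\liminf_{\rho\to 0} g(\rho) > 0$, and that is what gives the uniform lower bound $g\ge\delta$ on a punctured neighborhood.

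I expect the only genuinely delicate point to be justifying that $g(\rho)=\int_{|x|=\rho}|w|^2$ is well-defined and that the coarea/Fubini identity $\int_{|x|<r}|w|^2=\int_0^r g(\rho)\,d\rho$ holds. Since $w\in L^2(B_1)$ and $w$ is continuous on $B_1\setminus\{0\}$, the restriction of $|w|^2$ to each sphere $\{|x|=\rho\}$, $\rho>0$, is continuous hence the surface integral makes sense, and the radial decomposition of Lebesgue measure (polar coordinates) gives the identity with $g$ measurable and integrable on $(0,r)$ for each $r<1$. With that in hand the contradiction argument above is immediate and no further estimates are needed.
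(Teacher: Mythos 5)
Your proof is correct, and it takes a genuinely different route from the paper's. The paper argues constructively: it applies the mean-value theorem for integrals to $g(\rho)=\int_{|x|=\rho}|w|^2$ on each dyadic annulus $(2^{-j}r, 2^{-j+1}r)$, producing an explicit $r_j$ in that annulus with $g(r_j)$ equal to the average of $g$ over the annulus, and then bounds that average by $2\,(2^{-j+1}r)^{-1}\int_{|x|<2^{-j+1}r}|w|^2 \to 0$. Your argument instead proceeds by contradiction: if no such sequence existed, then $\liminf_{\rho\to 0}g(\rho)>0$, giving a uniform lower bound $g\ge\delta$ on a punctured neighborhood, which forces $r^{-1}\int_0^r g \ge \delta$ and contradicts the hypothesis. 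Both are correct and short. Your version is slightly more elementary (pure real analysis, no mean-value theorem needed) and would in fact work for any nonnegative measurable $g$ for which pointwise values are meaningful, whereas the paper's MVT step specifically exploits the continuity of $g$ that the assumption $w\in C(B_1\setminus\{0\})$ provides. The paper's version has the small advantage of being constructive and of locating $r_j$ in explicit dyadic annuli, though nothing in the rest of the paper uses that extra information. Your observation that the continuity of $w$ off the origin is what makes $g(\rho)$ pointwise well-defined, and that the polar-coordinate Fubini identity supplies the link between the hypothesis and $g$, is exactly the right justification and matches what the paper takes for granted.
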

 
\begin{proof}
First note that by assumption,  $\int_{|x| = r}|w|^2 $ as a function of $r$ is in $C((0,1))$.  By the mean-value theorem, for each $j\ge 1$, there exists $r_j\in (\frac{r}{2^{j}}, \frac{r}{2^{j-1}})$ such that 
$$ \int_{ 2^{-j}r<|x|< 2^{-j+1}r} |w|^2 = \int_{ 2^{-j}r}^{2^{-j+1}r} \int_{|x| = s} |w|^2 =  2^{-j}r  \int_{|x| = r_j} |w|^2.$$
When $j\rightarrow \infty$, we have $r_j\rightarrow 0$ and thus  by assumption,
    \begin{equation*}
        \int_{|x|= r_j}|w|^2\le \frac{2^j}{r}\int_{2^{-j}r<|x|< 2^{-j+1}r} |w|^2 \le  2 \left(2^{-j+1}r\right)^{-1}\int_{ |x|< 2^{-j+1}r} |w|^2  \rightarrow 0.
    \end{equation*}

\end{proof}

\section{Unique continuation property at infinity}
To prove Theorem \ref{maini}, we shall make use of a  unique continuation property near the origin that was established by Li and Nirenberg \cite[Theorem 10]{LN} for smooth solutions without an  assumption on   the potential's growth near the singularity. However, to adapt its proof to our context of $L^2$ weak solutions,  considerable modifications are needed in order for its application. It is also crucial to point out that a specific boundary term  involving $V$ in \cite{LN} can not be discarded,  due to insufficient information on the blow-up rate of  the potential near the origin in \cite{LN}. By imposing  the additional assumption \eqref{e15},   we have the desired control of singularity for the potential near the origin when pulling   to the origin by the Kelvin transform, so that the aforementioned boundary term can be managed.     For clarification  we are compelled to provide the detailed proof below.

\begin{theorem}\label{main}
   Let $W\in  L^{\infty}_{loc}(B_1\setminus\{0\})$ with  $W = \BigO{\frac{1}{|x|^M}}$  near $0$ for some constant $M$. Assume further there exists some $0<r_0<1$ such that  $W$ is locally Lipschitz in $ B_{r_0}\setminus\{0\} $ with  $ \frac{\partial}{\partial r} (r^2W) \le 0 $  on $0<r<r_0$.  Let  $w\in L_{loc}^2(B_1\setminus \{0\})$ be a weak solution to 
    \begin{equation*}
        -\Delta w = W w\ \ \text{on}\ \ B_1\setminus \{0\}.
    \end{equation*}
 If $w$ vanishes to infinite order in the $L^2$ sense at $0$, then $w\equiv 0$. 
\end{theorem}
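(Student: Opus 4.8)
\medskip
\noindent\emph{Proof strategy.}
The plan is to run an Almgren-type frequency argument on small punctured balls about the origin, using the sign condition $\frac{\partial}{\partial r}(r^2W)\le 0$ to produce the decisive monotonicity and the growth bound $W=O(|x|^{-M})$ — together with the infinite-order vanishing of $w$ — to control all singular contributions at $0$; this is precisely the place where the argument of \cite{LN} must be modified. First I would remove the singularity. Since $w$ vanishes to infinite order in the $L^2$ sense at $0$, condition \eqref{e19} holds for our $M$, so Lemma \ref{HP2} gives $w\in W^{2,2}_{loc}(B_1)\cap W^{2,p}_{loc}(B_1\setminus\{0\})$ for every $p<\infty$ (hence $w\in C^1(B_1\setminus\{0\})$), and $-\Delta w=Ww$ holds on all of $B_1$; by Lemma \ref{flatd}, $\nabla w$ (so also $w_r$ and $\nabla_\theta w$) and $Ww$ all vanish to infinite order in the $L^2$ sense at $0$. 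Set $H(r)=\int_{\partial B_r}|w|^2\,d\sigma$ and $D(r)=\int_{B_r}\big(|\nabla w|^2-Ww^2\big)\,dx$; testing the equation against $w$ gives $D(r)=\int_{\partial B_r}w\,w_r\,d\sigma$ and $H'(r)=\frac{n-1}{r}H(r)+2D(r)$ for a.e.\ $r$.

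The heart of the proof is a Rellich--Pohozaev identity. Multiplying $\Delta w=-Ww$ by $x\cdot\nabla w$, integrating over an annulus $\{a<|x|<r\}$ with $0<a<r<r_0$, and letting $a\to0$ along a suitable sequence — the boundary terms on $\{|x|=a\}$, which are dominated by $a\int_{\partial B_a}(|\nabla w|^2+|w_r|^2)$ and by $a^{1-M}\int_{\partial B_a}|w|^2$ (using $|W|\lesssim|x|^{-M}$), all tending to $0$ by the infinite-order vanishing of $w$ and $\nabla w$ — one obtains, for $0<r<r_0$,
\[
D'(r)=2\int_{\partial B_r}|w_r|^2\,d\sigma+\frac{n-2}{r}D(r)-\frac1r\int_{B_r}\big(x\cdot\nabla W+2W\big)w^2\,dx .
\]
Because $x\cdot\nabla W+2W=\frac1r\frac{\partial}{\partial r}(r^2W)\le 0$ on $0<r<r_0$, the last term is $\ge 0$, whence $\frac{d}{dr}\big(r^{2-n}D(r)\big)=r^{2-n}\big(D'(r)-\frac{n-2}{r}D(r)\big)\ge 0$: the function $r\mapsto r^{2-n}D(r)$ is non-decreasing on $(0,r_0)$. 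Since $|r^{2-n}D(r)|\le r^{2-n}\int_{B_r}\big(|\nabla w|^2+|Ww^2|\big)$ and both $\int_{B_r}|\nabla w|^2$ and $\int_{B_r}|Ww^2|$ vanish to infinite order at $0$ (the latter by the first part of Lemma \ref{flatd}), this monotone quantity tends to $0$ as $r\to0^+$, so $D(r)\ge 0$ on $(0,r_0)$; consequently $h(r):=r^{1-n}H(r)=\int_{S^{n-1}}|w(r\omega)|^2\,d\omega$ satisfies $h'(r)=2r^{1-n}D(r)\ge 0$, i.e.\ $h$ is non-decreasing.

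If $h(r_*)=0$ for some $r_*\in(0,r_0)$, then $D(r_*)=\int_{\partial B_{r_*}}w\,w_r=0$, so by monotonicity $r^{2-n}D(r)\equiv0$ on $(0,r_*]$; from the identity for $D'(r)$ above this forces $\int_{\partial B_r}|w_r|^2\equiv0$ there, i.e.\ $w$ is independent of $r$ on $B_{r_*}\setminus\{0\}$, and then the infinite-order vanishing of $w$ forces $w\equiv0$ on $B_{r_*}\setminus\{0\}$. Otherwise $h>0$ on $(0,r_0)$ and the frequency $N(r):=rD(r)/H(r)\ge 0$ is well defined there; combining the identities above with the Cauchy--Schwarz inequality $D(r)^2=\big(\int_{\partial B_r}w\,w_r\big)^2\le H(r)\int_{\partial B_r}|w_r|^2$ gives
\[
\frac{N'(r)}{N(r)}=\frac{2}{D(r)}\Big(\int_{\partial B_r}|w_r|^2-\frac{D(r)^2}{H(r)}\Big)-\frac{1}{rD(r)}\int_{B_r}\big(x\cdot\nabla W+2W\big)w^2\ge 0 ,
\]
so $N$ is non-decreasing and $N(r)\le N(r_0/2)=:C_0<\infty$ for $0<r\le r_0/2$. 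Hence $\frac{d}{dr}\log h(r)=\frac{2N(r)}{r}\le\frac{2C_0}{r}$, and integrating from $r$ to $r_0/2$ yields $h(r)\ge c\,r^{2C_0}$ for some $c>0$, so $\int_{|x|<r}|w|^2=\int_0^r s^{n-1}h(s)\,ds\ge c'\,r^{\,n+2C_0}$ — contradicting the infinite-order vanishing of $w$ at $0$. Thus in all cases $w$ vanishes on a punctured neighborhood of $0$; since $w$ solves $-\Delta w=Ww$ on $B_1\setminus\{0\}$ with $W\in L^\infty_{loc}(B_1\setminus\{0\})$, the classical weak unique continuation property (applied componentwise when $n=1$) gives $w\equiv0$ on $B_1\setminus\{0\}$, and as $w\in W^{2,2}_{loc}(B_1)$ we conclude $w\equiv0$ on $B_1$.

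The step I expect to be the main obstacle is the Rellich--Pohozaev identity: carrying it out rigorously when $w$ is only a flat $W^{2,2}$ solution after the extension and $W$ is merely locally Lipschitz with a possible $|x|^{-M}$ blow-up — that is, justifying the integration by parts on annuli and the vanishing of every boundary term at the origin, in particular the term involving $W$ (which, as the text notes, cannot simply be discarded), handled precisely by $W=O(|x|^{-M})$ together with the flatness furnished by Lemmas \ref{HP2} and \ref{flatd}. The remaining delicate point — that $H$ or $D$ might vanish at some radius — is dispatched cleanly by the monotonicity of $r^{2-n}D(r)$, as above.
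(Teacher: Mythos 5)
Your argument is mathematically sound and reaches the same conclusion, but it is a genuinely different route from the paper's. Both proofs rest on the same two preliminary facts — Lemma \ref{HP2} to extend the equation across $0$ and Lemma \ref{flatd} to upgrade the $L^2$ flatness of $w$ to flatness of $\nabla w$ and $Ww$ — and both end by invoking classical unique continuation for $L^\infty$ potentials; the divergence is in the middle. You run a classical Almgren frequency argument in Cartesian/polar coordinates: a Rellich--Pohozaev identity gives
$D'(r) = 2\int_{\partial B_r}|w_r|^2 + \tfrac{n-2}{r}D(r) - \tfrac{1}{r}\int_{B_r}(x\cdot\nabla W + 2W)w^2$, the sign condition $\partial_r(r^2W)\le 0$ makes the last term nonnegative, and monotonicity of $N(r)=rD(r)/H(r)$ yields a polynomial lower bound on $\int_{|x|<r}|w|^2$, contradicting flatness. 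The paper instead follows the Li--Nirenberg reduction: it substitutes $r=e^s$, $w = e^{-\frac{n-2}{2}s}v$, multiplies the resulting equation by $2v_s$ and integrates over $(s_j,s)\times S^{n-1}$ along a Lemma \ref{seq} sequence $s_j\to-\infty$, and derives the log-convexity $(\log\rho)_{ss}\ge 0$ for $\rho(s)=\int_{S^{n-1}}v^2$. These are equivalent monotonicity statements (since $\rho(s)=r^{n-2}h(r)$ and $\tfrac{d}{ds}\log h = 2N(r)$, monotonicity of $N$ is exactly convexity of $\log\rho$ in $s$), but the paper's derivation is more elementary — a one-dimensional integration in $s$ rather than the full Pohozaev calculus — and is arguably cleaner when $w$ has only $W^{2,2}$ regularity near the origin. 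Your approach is the more familiar one in the frequency-function literature and makes the role of $D(r)\ge0$ and the split between $h\equiv 0$ and $h>0$ quite transparent.

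Two small points you should tighten. First, you correctly flag the vanishing of the interior boundary terms in the Pohozaev identity as the delicate step, but you need a single sequence $a_j\to 0$ along which \emph{all} of $a\int_{\partial B_a}|\nabla w|^2$, $a\int_{\partial B_a}|w_r|^2$, $a\int_{\partial B_a}|W|w^2$ and $\int_{\partial B_a}|w||w_r|$ vanish; this is obtained as in Lemma \ref{seq} applied to the sum $|w|+|\nabla w|+|W|^{1/2}|w|$ (using the flatness provided by Lemma \ref{flatd} to absorb the factors $a^{1-M}$, $a$, etc.), exactly as the paper does with $|v_s|+|\nabla_\theta v|+|m^{1/2}v|$. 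Second, when you form $N'/N$ you implicitly assume $D(r)>0$; your own monotonicity of $r^{2-n}D(r)$ together with $D\ge0$ shows that if $D$ vanishes at any radius it vanishes identically on the subinterval (your $h(r_*)=0$ case covers this), so the division is legitimate on the complementary set — worth stating explicitly. With those two clarifications, your argument is complete.
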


\begin{proof}  By Corollary \ref{HPC}, we have $w\in W_{loc}^{2,2}(B_1)\cap W_{loc}^{2,p}(B_1\setminus \{0\})$ for all $p< \infty$, and  $$-\Delta w = W w \ \ \text{on}\ \   B_1.$$ 
In particular, $w\in C^1(B_1\setminus \{0\})$ by Sobolev embedding theorem. Under the polar coordinates $(r, \theta),   -\Delta w = Ww$ is written as 
    \begin{equation*}
        r^2w_{rr} + (n-1) rw_r + \Delta_\theta w = -r^2 Ww, \ \ 0<r<1, \ \theta\in S^{n-1}.
    \end{equation*}
    Setting $r = e^s, s<0$, then  a direct computation gives
    \begin{equation*}
        \begin{split}
            &w_s = w_r e^s = r w_r,\\
        & w_{ss} = w_{rr} e^{2s} + w_r e^s = r^2 w_{rr} + rw_r.
        \end{split}
    \end{equation*}
    Thus we have 
 \begin{equation}\label{ue}
     w_{ss} + (n-2)w_s +\Delta_\theta w = -e^{2s} Ww,  \ \ s<0, \ \theta\in S^{n-1}.
 \end{equation}   

Now let $w = e^{as} v$ with $a = -\frac{n-2}{2}$.  One can further verify that 
\begin{equation*} 
    \begin{split}
&w_s = ae^{as} v + e^{as}v_s = e^{as} (av +v_s),\\
 & w_{ss} =ae^{as}(av+v_s) + e^{as}(av_s+v_{ss})= e^{as}(v_{ss} + 2av_s +a^2v).    
    \end{split}
\end{equation*}
Plugging the above into \eqref{ue}, after simplification we obtain 
\begin{equation}\label{ve}
    v_{ss} +\Delta_\theta v = \left(-e^{2s}W +\frac{(n-2)^2}{2}\right)v: = mv,
\end{equation}
where $m = -e^{2s}W + \frac{(n-2)^2}{2}$.  By assumption, there exists some $s_0<0$ such that \begin{equation}\label{e8}
    m_s = -(e^{2s}W)_s = -(r^2W)_re^s\ge 0, \ \ s<s_0. 
\end{equation}

On the other hand, by the assumption of the $L^2$ flatness of  $w$  at $0$, we have $w_s(=rw_r)$ and $\nabla_\theta w$ vanish to infinite order in the $L^2$ sense  at $r=0$ by Lemma \ref{flatd}. Moreover, since $v =r^{-a}w$, one infers that $v_s = r^{-a}w_s -av $ and $\nabla_\theta v = r^a \nabla_\theta w $. Thus   $v_s$ and $\nabla_\theta v$ vanish to infinite order in the $L^2$ sense  at $r=0$ by Lemma \ref{flatd} again. Similarly, as $m= \BigO{\frac{1}{|x|^{N}}}$ near $r=0$ by definition, where $N:=\max\{M-2, 0\}$, we also have $|m|^\frac{1}{2}v$ vanishes to infinite order in the $L^2$ sense  at $r=0$. Thus applying Lemma \ref{seq} to $|v_s|+|\nabla_\theta v|+  |m^\frac{1}{2}v|$, there exists $s_j\rightarrow -\infty$ such that 
\begin{equation}\label{e4}
    \int_{S^{n-1}}\left.v^2_s\right|_{s=s_j} +   \int_{S^{n-1}}\left.|\nabla_\theta v|^2\right|_{s=s_j} 
 +  \int_{S^{n-1}}\left. |m|v^2\right|_{s=s_j} \rightarrow 0. 
\end{equation}

Multiply both sides of \eqref{ve} by $2v_s$ and integrate from $s_j$ to $s$ about $s$ and over $S^{n-1}$ about $\theta$. Then
\begin{equation}\label{e1}
    \int_{S^{n-1}}\int^s_{s_j} 2v_s v_{ss}   + 2 \int_{S^{n-1}}\int_{s_j}^s \Delta_\theta v v_s  = 2 \int_{S^{n-1}}\int_{s_j}^s mvv_s 
\end{equation}
 For the second term on the left hand, we use  Green's theorem and obtain
\begin{equation*}
    \begin{split}
   2 \int_{S^{n-1}}\int_{s_j}^s \Delta_\theta v v_s &=    - 2\int_{s_j}^s\int_{S^{n-1}} \nabla_\theta v\cdot \nabla_\theta v_s =    - \int_{S^{n-1}}  \int_{s_j}^s(|\nabla_\theta v|^2)_s  \\
    &=   - \int_{S^{n-1}}   |\nabla_\theta v|^2  + \int_{S^{n-1}} \left.   |\nabla_\theta v|^2\right|_{s=s_j} =   \int_{S^{n-1}}   v \Delta_\theta v   + \int_{S^{n-1}}  \left.  |\nabla_\theta v|^2\right|_{s=s_j}.
    \end{split}
\end{equation*} For the rest of the terms in \eqref{e1}, we compute directly to have  $$\int_{S^{n-1}}\int^s_{s_j} 2v_s v_{ss}    = \int_{S^{n-1}}\int^s_{-\infty} (v^2_s)_s  =  \int_{S^{n-1}}\left.v^2_s - v^2_s\right|_{s=s_j},$$
\begin{equation*}
    \begin{split}
     2 \int_{S^{n-1}}\int_{s_j}^s mvv_s    = \int_{S^{n-1}}\int_{s_j}^s (mv^2)_s - \int_{S^{n-1}}\int_{s_j}^s m_s v^2 =  \int_{S^{n-1}}   mv^2 - \int_{S^{n-1}}\int_{s_j}^s m_s v^2  -  \int_{S^{n-1}}\left. mv^2\right|_{s=s_j}
    \end{split}
\end{equation*}
Altogether we infer
\begin{equation*}
    \begin{split}
      \int_{S^{n-1}}  v_s^2 =   & -\int_{S^{n-1}}   v \Delta_\theta v   + \int_{S^{n-1}}   mv^2 - \int_{S^{n-1}}\int_{s_j}^s m_s v^2 \\
      &+\left(\int_{S^{n-1}}\left. v^2_s\right|_{s=s_j} -   \int_{S^{n-1}}\left.|\nabla_\theta v|^2\right|_{s=s_j} - \int_{S^{n-1}}\left. mv^2\right|_{s=s_j}\right).
    \end{split}
\end{equation*}
Here the last term   $\int_{S^{n-1}}\left. mv^2\right|_{s=s_j}$  could potentially be  troublesome  due to the presence of uncontrolled growth of $W$ in $m$ at $r=0$, but seems to have been overlooked in \cite{LN}. As seen in \eqref{e4}, the additional assumption  $W = \BigO{\frac{1}{r^M}}$ allows us  to eliminate this term as $j\rightarrow \infty$. In detail, letting $s_j\rightarrow -\infty$, and making use of \eqref{e8}-\eqref{e4}, we have 
\begin{equation}\label{e2}
      \int_{S^{n-1}}  v_s^2 \le -\int_{S^{n-1}}   v \Delta_\theta v+ \int_{S^{n-1}}   mv^2.
\end{equation}

Now  we consider 
$$\rho(s) = \int_{S^{n-1}} v^2(s, \theta)d\theta,\ \ s<s_0.  $$
We shall prove that
\begin{equation}\label{le}
     \rho_s^2\le \rho\rho_{ss},\ \ s<s_0.
\end{equation}
In fact, $$\rho_s = 2\int_{S^{n-1}} vv_s,$$ and by \eqref{ve},
\begin{equation*}
    \begin{split}
        \rho_{ss} = 2\int_{S^{n-1}} v_s^2 +vv_{ss} =  2\int_{S^{n-1}} v_s^2 + 2 
 \int_{S^{n-1}}v(-\Delta_\theta v + mv).  \end{split}
\end{equation*}
 Making use of H\"older inequality and \eqref{e2},   we   have
\begin{equation*}
      \begin{split}
     \rho_s^2 \le & 4 \int_{S^{n-1}} v^2\int_{S^{n-1}} v_s^2 = 4\rho\int_{S^{n-1}} v_s^2\\
     \le &  \rho\left(2\int_{S^{n-1}} v_s^2    +2\int_{S^{n-1}}   v(- \Delta_\theta v) +2 \int_{S^{n-1}}   mv^2\right) = \rho\rho_{ss}.
     \end{split}
\end{equation*}
 \eqref{le} is proved. In particular, this implies that whenever $\rho>0$,
 \begin{equation}\label{con}
     (\log \rho)_{ss}\ge 0.
 \end{equation}

Assume by contradiction that  there exists  $\bar s<s_0 $ such that $\rho(\bar s) >0$.  Let $s^\sharp$ be the infimum  of all $\hat s$ such that $\rho>0$   on the interval $(\hat s, \bar s)$. Then by \eqref{con},
\begin{equation}\label{e27}
    \log \rho(s) \ge \log\rho(\bar s) + \frac{d}{ds} \log\rho(\bar s)(s-\bar s), s^\sharp<s<\bar s. 
\end{equation} 
In particular, this implies that $\rho(s^\sharp)>0$ whenever $s^\sharp $ is finite, which would violate the choice of $s^\sharp$.  Thus $s^\sharp=-\infty$. Namely,  $\rho >0$ for all $s<\bar s$.  Consequently, by \eqref{e27} there exists some $C_1, C_2$ such that 
$\rho(s) \ge C_1 e^{C_2s}$ for all $s<\bar s$. Equivalently, for all $r<r_0: = e^{\bar s}$,
$$ \int_{S^{n-1}}v^2 \ge C_1r^{C_2}. $$
Recalling that $w =  r^{-\frac{n-2}{2}}v$, we further have when $r<<1$,
\begin{equation*}
    \begin{split}
     \int_{|x|<r} w^2 =  \int_{0}^rt^{n-1}\int_{S^{n-1}} w^2 = \int_{0}^rt^{n-1}t^{-n+2}\int_{S^{n-1}} v^2\ge C_1\int_{0}^r t^{1+C_2},
    \end{split}
\end{equation*}
which is either infinite or $\BigO{r^{2+C_2}}$. This  contradicts with the  $L^2$ flatness of $w$ at $0$. Thus $\rho = 0$ for all $s<s_0$ and so is $w$ on $|x|< e^{s_0}$. We further apply the  classical unique continuation property for $L^\infty$ potentials (see, for instance, \cite{JK}) to get $w\equiv 0$. 

\end{proof}
\medskip

Theorem \ref{maini} is a direct consequence of the above theorem after imposing the Kelvin transform. 

\begin{proof}[Proof of Theorem \ref{maini}:] Apply the Kelvin transform to obtain $w$ and $W$  under the setting in Lemma \ref{be}. In order to apply Theorem \ref{main}, we only need to verify that $(r^2W)_r\le 0 $ for $0<r<<1$. Clearly $W$ is locally Lipschitz on $B_{r_0}\setminus \{0\} $ for some $r_0>0$. In terms of the polar coordinates $(r, \theta)$,
 \begin{equation*}
     \begin{split}
         (r^2W)_r(r, \theta) =&  \left(\frac{V(\frac{1}{r}, \theta)}{r^2}\right)_r  = -\frac{2}{r^3} V\left(\frac{1}{r}, \theta\right) -\frac{1}{r^4} V_r \left(\frac{1}{r}, \theta\right)\\
         =& -\frac{1}{r^2} \left(\frac{2}{r}V\left(\frac{1}{r}, \theta\right) +\frac{1}{r^2}V_r \left(\frac{1}{r}, \theta\right)\right).
     \end{split}
 \end{equation*}
On the other hand,  when $r<<1$, by assumption $$0 \le  (r^2V)_r\left(\frac{1}{r}, \theta\right) = \left(2rV +r^2V_r \right)\left(\frac{1}{r}, \theta\right)  = \frac{2}{r}V\left(\frac{1}{r}, \theta\right) + \frac{1}{r^2} V_r\left(\frac{1}{r}, \theta\right).$$
So we have $ (r^2W)_r\le 0 $.  Theorem \ref{main} thus applies to give $w =0$ on $B_1$. Then  $u=0$ outside $B_1$. That $u=0$ on $B_1$ is a direct consequence of  the classical  unique continuation property with $L_{loc}^\infty$ potentials. 
 
 \end{proof}
\medskip

\begin{remark}\label{re2}
    An inspection of the proof to Theorem \ref{maini} indicates that the local Lipschitzian assumption of $V$ in Theorem \ref{maini} can be weakened to  merely assuming  $V_r\in  L^\infty_{loc}$  for $r>>1$ in the polar coordinates $(r, \theta)$. This simple observation will be used in some of our later proofs and examples.
\end{remark}

\begin{remark}\label{re3}
In the special case when $V$ is radial near infinity, the assumption \eqref{e14} actually implies that $V$  does not change sign near infinity. In fact,  according to   \eqref{e14}, $V(r_2)\ge \frac{r_1^2}{r_2^2}V(r_1)$ for all $0<r_1<r_2$. Consequently,  either $V(r)<0$ for all $r>>1$, or if $V(r_0)\ge 0$ at some $r_0>0$, then $V(r) \ge 0$ for all $r>r_0$. 

However, for general non-radial potentials, as there is no regularity assumption on $V$ along the  $\theta$ direction as mentioned in Remark \ref{re2}, one can easily construct examples of $V$ such that $V$ has opposite signs  along at least two $\theta$ directions. For instance, for each $(r, \theta)\in \mathbb R^2\setminus B_1$, let $V(r, \theta) = -\frac{1}{r^2}$ if $\theta\in (0, \pi]$, and $ V(r, \theta) = r $ if $\theta\in (\pi, 2\pi]$.  Clearly, this $V$ satisfies \eqref{e14} but changes sign near $\infty$.
\end{remark}
\medskip

\begin{proof}[Proof of Corollary \ref{ch}:] In view of Theorem \ref{maini}, we only need to verify that  \eqref{e15} and \eqref{e14} hold with $V$ replaced by $V+\lambda$ for $\lambda>0$. Indeed, given $V$ satisfying the assumptions in Theorem \ref{maini} and $\lambda>0$, $V+\lambda  = \BigO{|x|^{\max\{M, 0\}} }$ when $|x|>>1$. Moreover, it is straightforward to verify  that  
    $$      \frac{\partial}{\partial r} \left(r^2(V+\lambda) \right) =  \frac{\partial}{\partial r} (r^2V ) + 2\lambda r > 0, \ \ r>> 1.$$ 
    The proof is complete. 
    
\end{proof}

\begin{proof}[Proof of Corollary \ref{ho} and Corollary \ref{ei2}:]  Note that every  homogeneous function  in $L^\infty_{loc}(\mathbb R^n\setminus\{0\})$ is automatically Lipschitz along the $r$ direction. In view of Theorem \ref{maini} and Remark \ref{re2}, we just need to show that $ (r^2 V)_r\ge 0$ for Corollary \ref{ho}.  Since  $ V(r, \theta) = r^\alpha V(1, \theta)$ for $ \theta\in S^{n-1}$, by a straight-forward computation we get 
$$ (r^2 V)_r(r, \theta)= (r^{2+\alpha} V(1, \theta))_r = (2+\alpha)r^{\alpha +1} V(1, \theta) = (2+\alpha)r V(r, \theta).   $$
This is always non-negative in either one of the three cases in the corollary. Corollary \ref{ho} is thus proved. 

Corollary \ref{ei2} is a special case of Corollary \ref{ho} with $\alpha=0$ and $V=c^2\ge 0$. Hence  we immediately see that $u$ does not vanish to infinite order in the $L^2$ sense at $0$. In other words, there exists some $N>0$, such that $\varlimsup_{r\rightarrow \infty}r^N\int_{|x|>r} |u|^2 > 0.$ Thus  $\varlimsup_{r\rightarrow \infty}r^{N+1}\int_{|x|>r} |u|^2 =\infty.$

\end{proof}
\medskip

The following two examples show that the unique continuation at infinity in Theorem \ref{maini} fails  if   \eqref{e15} and/or \eqref{e14} is dropped.

\begin{example}\label{ex1}
   Given any $C^2$ function $\phi$   on $\mathbb R^+\cup\{0\}$ such that  $\phi(r)=1 $ when $0\le r\le \frac{1}{2}$ and $\phi(r) =r$ when $r\ge 1$, let $u = e^{-e^{\phi(r)}}$ on $\mathbb R^n$. Then   $u$ vanishes to infinite order in the $L^2$ sense at $\infty$ and is a weak solution to 
 $$ -\Delta u = Vu \ \ \text{on}\ \ \mathbb R^n$$
  with $$V: = -e^{2\phi(r)}\left(\phi'(r)\right)^2 + \left(\phi''(r)+\left(\phi'(r)\right)^2 +\frac{n-1}{r}\phi'(r)\right)e^{\phi(r)}.$$
  Clearly, $V\in L_{loc}^\infty(\mathbb R^n)$. %,  and $(r^2 V)'< 0$ {\color{blue}???} when $r>>1$.  
  Note that $V= \BigO{e^{2r}}$ when $r>>1$.
\end{example}

\begin{example}\label{ex2}
 For every $\epsilon \ge 2$, the function $u = e^{- r^{ \epsilon}}$ vanishes to infinite order in the $L^2$ sense at $\infty$, and is a weak solution to 
 $$ -\Delta u = Vu \ \ \text{on}\ \ \mathbb R^n$$
  with $$V: =  -\epsilon^2r^{ 2\epsilon-2} + ((n-2)\epsilon +   \epsilon^2)r^{ \epsilon -2} \in L^\infty_{loc}(\mathbb R^n).$$ Then $V = \BigO{r^{2\epsilon-2 }}$ when $r>>1$. Note that a direct computation shows that  when $r>>1$,   $$(r^2 V)' = - r^{\epsilon-1}\epsilon \left( 2\epsilon^2 r^\epsilon -(n-2)\epsilon -   \epsilon^2)\right))< 0.$$  
\end{example}
\medskip

  %It is worth pointing out that at $0$,  $u$ in the above example is smooth everywhere except at  $r=0$, where $u$ is not even $C^1$. 

On the other hand, it is worth pointing out that there are   many  functions satisfying the assumptions for $V$ in Theorem \ref{maini}. Thus the theorem can be readily  applied to obtain the unique continuation property of $-\Delta u = V u  $ at infinity for such potentials.

  \begin{example}\label{ex7}
      Given any measurable nonnegative function $g$ on $\mathbb R^n$ such that $g=\BigO{|x|^M}$ for some constant $M$ when $x>>1$, let $V(r, \theta) = \frac{1}{r^2}\int_1^r g(r, \theta)dr$ for a.e.  $(r, \theta)$ in its polar coordinates. Then  $V$ satisfies all the assumptions in  Theorem \ref{maini}. 
  \end{example}

We end  the section by providing  a simple case of the Landis conjecture when the potential decays sufficiently  near infinity.

\begin{pro}\label{tr1}
    Let $V\in L^\infty(\mathbb R^n), n\ge 2$ and  $V =\BigO{ \frac{1}{|x|^M} }$ near $\infty$ for some constant $M\ge 2$. Let $u\in L_{loc}^2(\mathbb R^n)$ be a weak solution to 
  $$ -\Delta u = V u \ \ \text{on} \ \  \mathbb R^n. $$
 If  
      $\lim_{r\rightarrow \infty}r^m\int_{|x|>r} |u|^2 = 0$ for each $m\ge 0$, then $u\equiv 0$. In particular, if $V$ has compact support, then  $u\equiv 0$ whenever $u$ vanishes to infinite order in the $L^2$ sense at $\infty$. 
\end{pro}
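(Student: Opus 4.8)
The plan is to reduce Proposition~\ref{tr1} to Theorem~\ref{maini} by observing that the hypothesis $V = O(|x|^{-M})$ with $M\ge 2$ is a very favorable special case of \eqref{e15}: taking the constant $M$ in \eqref{e15} to be $0$ (since a decaying $V$ is certainly $O(|x|^0)$) already satisfies the growth bound. The only genuine point to check is the monotonicity condition \eqref{e14}, i.e. $\frac{\partial}{\partial r}(r^2 V)\ge 0$ for $|x|\gg 1$. This will \emph{not} hold for an arbitrary decaying potential, so the argument cannot simply invoke Theorem~\ref{maini} verbatim; instead I would mirror the Kelvin-transform route through Theorem~\ref{main} directly, where the decay of $V$ translates into a bound on $W$ near the origin that is strong enough to make the relevant boundary term vanish without needing the sign condition on $\frac{\partial}{\partial r}(r^2 W)$.

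Concretely, first apply the Kelvin transform as in Lemma~\ref{be}: set $w(x)=|x|^{-(n-2)}u(x/|x|^2)$ and $W(x)=|x|^{-4}V(x/|x|^2)$ on $B_1\setminus\{0\}$. By Lemma~\ref{be}(2), $V=O(|x|^{-M})$ near $\infty$ gives $W=O(|x|^{M-4})$ near $0$; since $M\ge 2$, this means $W$ is bounded by $O(|x|^{-2})$ near $0$ in the worst case, and in fact $W\in L^\infty_{loc}(B_1)$ when $M\ge 4$. By Lemma~\ref{be}(4) and the infinite-order vanishing of $u$ at $\infty$, $w$ vanishes to infinite order in the $L^2$ sense at $0$, and by Lemma~\ref{be}(5) $w$ solves $-\Delta w = Ww$ on $B_1\setminus\{0\}$. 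The key gain over the general situation is that the exponent $M-4$ is so negative (at most $-2$) that in the displayed energy identity in the proof of Theorem~\ref{main}, the potential-dependent quantity $m=-e^{2s}W+\tfrac{(n-2)^2}{2}$ is already bounded (or grows at most like $r^{-2}\cdot r^2 = O(1)$ after the $e^{2s}$ factor), so $|m|v^2$ restricted to spheres is controlled by the $L^2$ flatness of $v$ alone. One then reruns the multiply-by-$2v_s$-and-integrate computation to obtain \eqref{e2}, hence \eqref{le} and the log-convexity of $\rho(s)=\int_{S^{n-1}}v^2$, and concludes $\rho\equiv 0$ near $s=-\infty$ exactly as in Theorem~\ref{main}, so $w\equiv 0$ on a punctured ball; unique continuation for $L^\infty$ potentials then gives $w\equiv 0$ on $B_1$ and $u\equiv 0$ outside $B_1$, and another application of unique continuation gives $u\equiv 0$ on all of $\mathbb R^n$.

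For the ``in particular'' clause, if $V$ has compact support, then $V$ is trivially $O(|x|^{-M})$ near $\infty$ for every $M$, so the first part applies immediately to any $u$ vanishing to infinite order in the $L^2$ sense at $\infty$. Alternatively, and perhaps more transparently, when $V$ is compactly supported the equation $-\Delta u = Vu$ reduces to $\Delta u = 0$ outside a large ball $B_R$; the Kelvin transform $w$ of $u$ is then harmonic on a punctured ball $B_{1/R}\setminus\{0\}$, the $L^2$ flatness of $u$ at $\infty$ makes $w$ flat at $0$, so $w$ extends harmonically across $0$ (removable singularity) and a flat harmonic function vanishes identically; thus $u\equiv 0$ outside $B_R$, and unique continuation finishes the proof.

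The main obstacle is bookkeeping rather than conceptual: one must verify carefully that with $M\ge 2$ the term $\int_{S^{n-1}}|m|v^2|_{s=s_j}$ in the analogue of \eqref{e4} really does tend to $0$ along a suitable sequence $s_j\to-\infty$, using Lemma~\ref{flatd} and Lemma~\ref{seq} — here the borderline case $M=2$ (where $W$ may blow up like $|x|^{-2}$ and $m$ is merely bounded but not decaying) needs the most care, since one is relying on the flatness of $v$ itself, not of $m^{1/2}v$ with a decaying weight. Once that single estimate is in place, the rest is a routine specialization of the proof of Theorem~\ref{main}; the regularity input $w\in W^{2,2}_{loc}(B_1)$ is supplied by Lemma~\ref{HP2} since $M\ge 0$ there is satisfied and \eqref{e19} follows from infinite-order vanishing.
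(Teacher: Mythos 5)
Your main argument has a genuine gap, and it does not coincide with the paper's actual proof, which is substantially simpler.

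You correctly recognize that the monotonicity condition \eqref{e14} will not hold for a general decaying potential, and you propose to compensate with the decay of $W$ near the origin. But you have misidentified where \eqref{e14} is used in the proof of Theorem~\ref{main}. It is not used to kill a boundary term: the boundary term $\int_{S^{n-1}}\left. mv^2\right|_{s=s_j}$ is handled by \eqref{e4} via Lemma~\ref{seq} and Lemma~\ref{flatd}, which only requires the growth bound $W=O(|x|^{-M})$. The monotonicity $(r^2W)_r\le 0$ (equivalently $m_s\ge 0$) is used to discard the \emph{volume} term $-\int_{S^{n-1}}\int_{s_j}^s m_s v^2$ in the energy identity, because that term is then non-positive. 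Your observation that $m=-e^{2s}W+\tfrac{(n-2)^2}{2}$ is bounded when $W=O(|x|^{-2})$ controls $m$ but says nothing about $m_s$: a potential like $W=(1+\sin(1/r))/r^2$ satisfies $W=O(r^{-2})$ yet $m_s$ is unbounded and changes sign. So without \eqref{e14} there is no route to \eqref{e2}, and hence no log-convexity \eqref{le}; the energy method simply does not close. This is a missing idea, not bookkeeping.

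The paper's proof bypasses the energy method entirely. After the Kelvin transform and the extension across the origin via Lemma~\ref{HP2}, it observes that $W=O(|x|^{M-4})$, so for $M>2$ one has $W\in L^{n/2}_{loc}(B_1)$ and the strong unique continuation theorem of Jerison--Kenig \cite{JK} applies directly; for the borderline $M=2$ one has $W=O(|x|^{-2})$, a Hardy-type singularity, and the strong unique continuation theorem of \cite{Pa} applies. Either way $w\equiv 0$ on $B_1$, hence $u\equiv 0$. What the decay hypothesis $M\ge 2$ buys is precisely an integrable enough singularity of $W$ at the origin to invoke an existing strong unique continuation result -- not to rerun the Li--Nirenberg machinery.

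Your alternative argument for the compactly supported case (harmonicity outside a ball, Kelvin transform, removable singularity via Lemma~\ref{HP}, real-analyticity forcing a flat harmonic function to vanish, then ordinary unique continuation) is correct and elementary, and is essentially the content of Corollary~\ref{tr}. It is worth noting, however, that this reasoning does not extend to the general $M\ge 2$ case, where one genuinely needs a strong unique continuation theorem that tolerates a singular potential.
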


\begin{proof}
    Applying the Kelvin transform as in Lemma \ref{be}, we have  $$ -\Delta w = Ww  \ \ \text{on}\ \ B_1\setminus\{0\}$$
    for $w$ and $W$ defined in \eqref{e16}. By Corollary \ref{HPC}, $w\in W^{2,2}_{loc}(B_1)$, vanishes to infinite order in the $L^2$ sense at 0, and satisfies $ -\Delta w = Ww $   on  $B_1$. On the other hand, when $M\ge 2$,  $W = \BigO{|x|^{-4+M}}\le \BigO{|x|^{-2}}$. According to the  unique continuation property of  \cite{Pa} for potentials of the form $C|x|^{-2}$, we see that $w= 0$ on $B_1$  and thus $u\equiv 0$.

    \end{proof}

%\begin{pro}\label{tr1}
 %  Let $V\in L^\infty(\mathbb R^n)$ with compact support. Let $u\in L_{loc}^2(\mathbb R^n)$ be a weak solution to 
%  $$ -\Delta u = V u \ \ \text{on} \ \  \mathbb R^n. $$
% If %$u = O\left(\frac{1}{|x|^m}\right)$ for all $m\ge 0$ and $|x|>>1$, 
   %   $\lim_{r\rightarrow \infty}r^m\int_{|x|>r} |u|^2 = 0$ for each $m\ge 0$, then $u\equiv 0$. 
%\end{pro}

%This proposition is actually a direct consequence of Theorem \ref{maini}, but we shall provide an elementary proof based on the following lemma on harmonic functions near infinity. 

The proof of Proposition \ref{tr1} immediately yields the following unique continuation property for harmonic functions near infinity. 

\begin{cor}\label{tr}
  Suppose $u$ is harmonic  on  $\mathbb R^n\setminus B_{r_0}$ for some $r_0>>1$ and $\lim_{r\rightarrow \infty}r^m\int_{|x|>r} |u|^2 = 0$ for each $m\ge 0$. Then $u\equiv 0$ on  $\mathbb R^n\setminus B_{r_0}$. Namely, if $u\ne 0$ somewhere on  $\mathbb R^n\setminus B_{r_0}$, then there exists some  $N\ge 0$ such that $\varlimsup_{r\rightarrow \infty}r^N\int_{|x|>r} |u|^2 = \infty$.
\end{cor}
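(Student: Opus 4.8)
The plan is to read off Corollary~\ref{tr} from the $V\equiv 0$ instance of the argument used for Proposition~\ref{tr1}. First I would normalize the domain: replacing $u$ by $u(r_0\,\cdot\,)$ --- a substitution that preserves harmonicity and, up to harmless constants, the infinite-order decay hypothesis --- I may assume $r_0=1$, so that $u$ is harmonic on $\mathbb R^n\setminus B_1$. Then I would apply the Kelvin transform of Lemma~\ref{be} with potential $W\equiv 0$: setting $w(x)=|x|^{2-n}u(x/|x|^2)$ on $B_1\setminus\{0\}$, the invariance of harmonicity recorded in \eqref{ll} (equivalently, Lemma~\ref{be}(5)) shows that $w$ is harmonic on $B_1\setminus\{0\}$, while Lemma~\ref{be}(4) turns the hypothesis $\lim_{r\to\infty}r^m\int_{|x|>r}|u|^2=0$ for all $m\ge 0$ into the statement that $w\in L^2_{loc}(B_1)$ and $w$ vanishes to infinite order in the $L^2$ sense at $0$.

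Next I would remove the singularity at the origin. Since $w$ is $L^2$-flat at $0$, Hölder's inequality gives in particular $\lim_{r\to 0}r^{-2}\int_{|x|<r}|w|=0$, so the Harvey--Polking type Lemma~\ref{HP}, applied with $P(x,D)=\Delta$, $l=2$ and $f=0$, yields that $w$ is harmonic on all of $B_1$. A harmonic function is real-analytic, and one that vanishes to infinite order at an interior point of a connected domain must vanish identically (alternatively, one invokes the classical unique continuation property for $\Delta$, exactly as at the end of the proof of Theorem~\ref{main}); hence $w\equiv 0$ on $B_1$. Undoing the Kelvin transform yields $u\equiv 0$ on $\mathbb R^n\setminus B_1$, and undoing the initial rescaling gives $u\equiv 0$ on $\mathbb R^n\setminus B_{r_0}$.

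For the equivalent reformulation, suppose $u\not\equiv 0$ somewhere on $\mathbb R^n\setminus B_{r_0}$. The contrapositive of what was just proved says that $u$ does not vanish to infinite order in the $L^2$ sense at infinity, so there is some $N_0\ge 0$ with $\varlimsup_{r\to\infty}r^{N_0}\int_{|x|>r}|u|^2>0$; gaining one more power of $r$ then forces $\varlimsup_{r\to\infty}r^{N_0+1}\int_{|x|>r}|u|^2=\infty$, just as in the proof of Corollary~\ref{ei2}, and one takes $N=N_0+1$. I do not anticipate any real obstacle here, since every ingredient --- invariance of harmonicity under the Kelvin transform, the translation of the decay rate in Lemma~\ref{be}(4), the removable-singularity Lemma~\ref{HP}, and unique continuation for $\Delta$ --- is already established above; the only point worth a sentence is the initial rescaling and the remark that Lemmas~\ref{be} and~\ref{HP}, though stated on $B_1$, apply on any ball after an obvious dilation.
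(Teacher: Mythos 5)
Your proof is correct and takes essentially the paper's route: the paper points to the argument of Proposition~\ref{tr1} specialized to $V\equiv 0$, which is precisely the Kelvin transform, followed by removal of the singularity at $0$ via the Harvey--Polking lemma, and then unique continuation for harmonic functions. Your choices to invoke Lemma~\ref{HP} directly with $P(x,D)=\Delta$, $f=0$ rather than Lemma~\ref{HP2}, and to conclude from real-analyticity of harmonic functions rather than the $L^{n/2}$-potential unique continuation of \cite{JK}, are harmless simplifications of that same scheme.
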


It is  not clear to us  but would be desirable to know if  the   flatness  assumption of  $u$ at infinity in Proposition \ref{tr1} could  be relaxed to  a finite order vanishing at infinity in particular when $V$ has compact support. On the other hand, we  point  out that for the truncated domain $ \mathbb R^n\setminus B_{r_0}$ in Corollary  \ref{tr}, the $L^2$ flatness  assumption of $u$ at infinity  can not be weakened. In fact, the following simple example gives harmonic functions on $\mathbb R^n\setminus B_1$  that vanishes at infinity at any given order in the $L^2$ sense.

\begin{example}\label{ex3}
Let $P_k$ be a   homogeneous harmonic polynomial of degree $k\ge 0$ 
  on $\mathbb R^n$. Then the function $u  = \frac{1}{|x|^{n-2}}P_k\left(\frac{x}{|x|^2}\right)$   is harmonic on $\mathbb R^n\setminus B_1$. 
Note that    $u$ is only of $ \BigO{\frac{1}{|x|^{k+n-2}}}$ at $\infty$. In particular, for each  $m\ge 2k-4+n $,
$$\lim_{r\rightarrow \infty}r^m\int_{|x|>r} |u|^2 \ne 0  $$
\end{example}

Proposition \ref{tr1} can be compared with a uniqueness result of Chirka and Rosay \cite{CR} for $\bar\partial$, which  states that for $V\in L^\infty(\mathbb R^2)$ with compact support, any solution $u$ to $$\bar\partial u = V u\ \ \text{on} \ \ \mathbb R^2 $$ with $ \lim_{z\rightarrow \infty}u =0 $ must vanish identically.  Interestingly, the potential $V$ in Meshkov's  example \cite{Me}   vanishes on  a family of concentric annuli with radii going  to infinity.

%\begin{cor}
 %   Let $\Omega$ be a bounded domain in $\mathbb R^n$. Let $u\in L^2_{loc}(\mathbb R^n\setminus \bar \Omega)$ be a weak solution to \begin{equation*}
  %           -\Delta u = 0 \ \ \text{on}\ \ \mathbb R^n\setminus \bar \Omega.
   %                 \end{equation*}
   %If $u\ne 0$ on $\partial \Omega$, then there exists some  $N\ge 0$ such that $\lim_{r\rightarrow \infty}|r|^N\int_{|x|>r} |u|^2 = \infty$.
%\end{cor}

%\begin{proof}
 %First, by Lemma \ref{HP2}, $u\in C^1$ on $\mathbb R^n\setminus \bar \Omega $. Without loss of generality, suppose $u>0$ on $\partial \Omega$. Extend $u$ to be in $C^\infty(\mathbb R^n)$, still denoted as $u$,  such that $ u>0$ on $\Omega$. Let     $V=0$ on $\mathbb R^n\setminus \Omega $, and $V  = \frac{-\Delta u}{u} $ on $  \Omega $. Then $u$ is a weak solution to  
  % $$  -\Delta u = V u \ \ \text{on} \ \  \mathbb R^n.$$
   
   % Assume by contradiction that for all $m\ge 0$, $\lim_{r\rightarrow \infty}|r|^m\int_{|x|>r} |u|^2  $ is finite, then   $$\lim_{r\rightarrow \infty}|r|^m\int_{|x|>r} |u|^2 =\lim_{r\rightarrow \infty}\frac{1}{|r|}\cdot\lim_{r\rightarrow \infty} |r|^{m+1}\int_{|x|>r} |u|^2  = 0.$$  Hence $u\equiv 0$ by Proposition \ref{tr1}. But this would contradict with the fact that  $u> 0$ on $\partial \Omega$. The proof is complete. 
%\end{proof}

\section{Sign changing property near infinity}
Once again, we will use the  Kelvin transform to convert the solution behavior at infinity to that near the origin. As seen in the previous sections, the order of the potential's singularity is changed as a result of   the transform, which needs be taken care of in discretion.  

\begin{theorem} \label{ca}
Let $W\in L_{loc}^\infty(B_1\setminus\{0\}), W\ge 0$  on $B_1$, 
\begin{equation}\label{e5}
  W\notin L^1 \ \ \text{and}\ \   \frac{1}{|x|^{n+2}W }\in L^1
\end{equation}
  near $0$.  Let   $w\in L_{loc}^1(B_1\setminus \{0\})$  be a weak  solution to 
\begin{equation}\label{nr}
    -\Delta w = Ww\ \ \text{on} \ \ B_1\setminus \{0\},
\end{equation}
with  \begin{equation}\label{e7}
     w = \BigO{\frac{1}{|x|^{n-2}}}
 \end{equation} near $0$. If $w\ge 0$ (or  $w\le 0 $)   on $B_1$, then $w\equiv 0$. 
\end{theorem}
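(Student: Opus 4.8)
The plan is to use positivity to make $w$ superharmonic on the punctured ball, extract the integrability $Ww\in L^1$ near $0$ from spherical averages, peel off the fundamental–solution singularity by a Bôcher–type decomposition, and then play the two opposite integrability hypotheses in \eqref{e5} against the logarithmically divergent integral $\int_{0}r^{-1}\,dr$. I would present the argument for $n\ge 3$, the case relevant to the Kelvin–transform applications; for $n\le 2$ the fundamental solution and the exponents change but the scheme is unchanged.

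First I would reduce to $w\ge 0$ (replacing $w$ by $-w$ otherwise), and use elliptic regularity to get $w\in C^1(B_1\setminus\{0\})$ with $-\Delta w=Ww\ge 0$ there since $W\ge 0$; thus $w$ is superharmonic on the connected set $B_1\setminus\{0\}$, and if it vanished at an interior point the strong minimum principle would already force $w\equiv 0$. So assume $w>0$ on $B_1\setminus\{0\}$. Writing $\bar w(r)$ for the spherical average of $w$ over $\partial B_r$, the spherically averaged equation gives $(r^{n-1}\bar w'(r))'=-\,r^{n-1}\cdot(\text{average of }Ww\text{ over }\partial B_r)\le 0$, so $g(r):=r^{n-1}\bar w'(r)$ is non‑increasing. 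If $g(r_1)>0$ for some $r_1$, integrating $\bar w'(r)\ge g(r_1)r^{1-n}$ from $r$ to $r_1$ forces $\bar w(r)\to-\infty$ as $r\to0$, contradicting $w\ge 0$; hence $g\le 0$ and $L:=\lim_{r\to0^+}g(r)\in(-\infty,0]$ exists. Integrating the averaged equation over an annulus and letting the inner radius tend to $0$ then yields $\int_{B_\rho}Ww\,dx\lesssim L-g(\rho)<\infty$, so $Ww\in L^1$ near $0$. (Only the sign of $w$ was used so far.)

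Next, with $Ww\in L^1(B_{1/2})$, I would set $\tilde w:=E*(Ww\,\mathbf{1}_{B_{1/2}})$ with $E(x)=c_n|x|^{2-n}$; then $\tilde w\ge 0$ is superharmonic with $-\Delta\tilde w=Ww$ on $B_{1/2}$, so $h:=w-\tilde w$ is harmonic on $B_{1/2}\setminus\{0\}$ and, since $\tilde w\ge 0$, bounded above by $w=O(|x|^{2-n})$. A harmonic function on a punctured ball bounded above by a multiple of the fundamental solution has singular part exactly $c|x|^{2-n}$ for some $c\ge 0$ (a Bôcher–type removable–singularity statement: the upper bound rules out higher–order poles, and comparing with the sphere–average estimate $\bar{\tilde w}(r)=o(r^{2-n})$ forces $c\ge 0$). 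Hence
$$w=c|x|^{2-n}+h_0+\tilde w \qquad\text{on } B_{1/2}\setminus\{0\},$$
with $h_0$ harmonic on $B_{1/2}$. This is the step where the hypothesis $w=O(|x|^{2-n})$ is genuinely used, and I expect it (together with the lower bound it produces in the next step) to be the main obstacle.

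Finally I would split into two cases. If $c>0$, then for $|x|$ small $w(x)\ge c|x|^{2-n}+h_0(x)\ge\tfrac{c}{2}|x|^{2-n}$, so $W|x|^{2-n}\le\tfrac{2}{c}\,Ww$ there, and Cauchy–Schwarz gives
\begin{equation*}
\begin{split}
\int_{B_{r_1}}\frac{dx}{|x|^n}&=\int_{B_{r_1}}\Big(\frac{1}{|x|^{n+2}W}\Big)^{\frac12}\Big(\frac{W}{|x|^{n-2}}\Big)^{\frac12}dx\\
&\le\Big(\int_{B_{r_1}}\frac{dx}{|x|^{n+2}W}\Big)^{\frac12}\Big(\frac{2}{c}\int_{B_{r_1}}Ww\,dx\Big)^{\frac12}<\infty,
\end{split}
\end{equation*}
by \eqref{e5} and the previous step; but $\int_{B_{r_1}}|x|^{-n}\,dx=\infty$, a contradiction, so $c=0$. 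If $c=0$, then $w=h_0+\tilde w$ extends to a nonnegative superharmonic function on $B_{1/2}$ solving $-\Delta w=Ww$ there; if $w(0)>0$, lower semicontinuity gives $w\ge w(0)/2$ near $0$, whence $\int_{B_{r_2}}Ww\,dx\ge\tfrac{w(0)}{2}\int_{B_{r_2}}W=\infty$ by \eqref{e5}, contradicting $Ww\in L^1$ near $0$; hence $w(0)=0$, the strong minimum principle forces $w\equiv 0$ on $B_{1/2}$, and the classical unique continuation property for $-\Delta w=Ww$ with $W\in L^\infty_{loc}(B_1\setminus\{0\})$ (cf.\ \cite{JK}) gives $w\equiv 0$. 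Note that the $\frac{1}{|x|^{n+2}W}\in L^1$ part of \eqref{e5} is needed precisely to kill the genuinely singular case $c>0$, while the $W\notin L^1$ part does the work when the singularity is removable.
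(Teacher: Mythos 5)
Your proof is correct but takes a genuinely different route from the paper. The paper proceeds in three steps: (i) a cut-off/Cauchy--Schwarz argument (Lemma~\ref{l1}), using both $w=O(|x|^{2-n})$ and $\frac{1}{|x|^{n+2}W}\in L^1$, to get $Ww\in L^1$ near $0$; (ii) the Harvey--Polking removability lemma (Lemma~\ref{HP}) to extend $-\Delta w = Ww$ across the origin (Lemma~\ref{rs}); and (iii) a direct contradiction: if $w\not\equiv 0$, the superharmonicity of the extended $w$ gives $w\ge\gamma_0>0$ near $0$, and then $\int_{B_\epsilon}Ww\ge\gamma_0\int_{B_\epsilon}W=\infty$ contradicts step (i). You instead obtain $Ww\in L^1$ by a monotonicity argument on the spherical averages that uses \emph{only} $w\ge 0$ (a cleaner and more elementary derivation than the paper's, which consumes the $\frac{1}{|x|^{n+2}W}\in L^1$ hypothesis at that stage), and then you replace the Harvey--Polking removability step with an explicit B\^ocher decomposition $w=c|x|^{2-n}+h_0+\tilde w$, splitting according to whether $c>0$ or $c=0$. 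The two cases transparently isolate the two halves of \eqref{e5}: the $\frac{1}{|x|^{n+2}W}\in L^1$ condition, via your Cauchy--Schwarz estimate against $\int|x|^{-n}\,dx=\infty$, kills $c>0$, while $W\notin L^1$ kills $c=0$ exactly as in the paper's final step. Your B\^ocher step is correct (apply B\^ocher to the nonnegative harmonic function $CE-h$ on a small punctured ball, then remove the singularity of $h-cE$ on the larger ball, and deduce $c\ge0$ from $\bar w(r)=cr^{2-n}+h_0(0)+o(r^{2-n})\ge0$), and your other steps hold up. The main trade-off is that the paper's Harvey--Polking route is dimension-independent, whereas yours is written for $n\ge3$ and must be re-expressed with the logarithmic fundamental solution in dimension $2$, which you acknowledge; both arguments end by invoking classical unique continuation to propagate $w\equiv0$ from a neighborhood of the origin to all of $B_1$.
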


We first need the following preparation lemmas to resolute the singularity at $0$.
\begin{lem}\label{l1}
  For $W$ and $w$ in Theorem \ref{ca}, one has
  $$ Ww\in L^1_{loc}(B_1).$$
\end{lem}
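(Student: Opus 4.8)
\textbf{Proof proposal for Lemma \ref{l1}.}
The plan is to show $Ww\in L^1_{loc}(B_1)$ by splitting a small ball $B_\rho$ into a region where $W$ is bounded above by a fixed constant and a region where it is not, and estimating each separately. Since the singularity is isolated at $0$, it suffices to prove $\int_{B_\rho}|Ww|<\infty$ for some $\rho<1$. Using the hypothesis $w=O(|x|^{-(n-2)})$ near $0$, write $|Ww|\lesssim W(x)/|x|^{n-2}$ on $B_\rho\setminus\{0\}$, so the whole matter reduces to showing $\int_{B_\rho}\frac{W(x)}{|x|^{n-2}}\,dx<\infty$.

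First I would treat the ``large $W$'' set $E:=\{x\in B_\rho:\ |x|^2W(x)>1\}$. On $E$ one has $\frac{1}{|x|^{n+2}W(x)}<\frac{1}{|x|^{n}}\cdot\frac{1}{|x|^2 W(x)}\cdot|x|^2$, which is not directly what I want; instead, the cleaner move is: on $E$, since $|x|^2 W>1$, we get $W>|x|^{-2}$, hence $\frac{W}{|x|^{n-2}}> |x|^{-n}$, which goes the wrong way. So rather than bounding $W$ from below, I would instead use Cauchy--Schwarz-type interpolation against the integrable weight $\frac{1}{|x|^{n+2}W}$: write
\begin{equation*}
\frac{W(x)}{|x|^{n-2}}=\left(\frac{1}{|x|^{n+2}W(x)}\right)^{1/2}\cdot\left(|x|^{n+2}W(x)\cdot\frac{W(x)^2}{|x|^{2n-4}}\right)^{1/2}=\left(\frac{1}{|x|^{n+2}W}\right)^{1/2}\left(W^3\,|x|^{\,6-n}\right)^{1/2},
\end{equation*}
which still contains $W$ to a positive power and is therefore not obviously controllable. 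The honest approach, and the one I would actually carry out, is the two-region split: on the set $\{|x|^2 W(x)\le 1\}$ we have $\frac{W(x)}{|x|^{n-2}}\le \frac{1}{|x|^{n}}$, which is \emph{not} integrable, so this naive split also fails.

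Because of this, the real content — and the step I expect to be the main obstacle — is that one cannot get $L^1$ control of $Ww$ from the pointwise bound on $w$ alone; one must use that $w$ is a \emph{solution}. The plan is therefore: let $\phi$ be a cutoff equal to $1$ on $B_{\rho/2}$, supported in $B_\rho$, and test the equation $-\Delta w=Ww$ on $B_1\setminus\{0\}$ against $\phi$ (legitimate after first excising a tiny ball $B_\varepsilon$ and using $w=O(|x|^{-(n-2)})$, $\nabla$ of the Green-type comparison, to kill the inner boundary terms as $\varepsilon\to0$ — here $w\ge0$ makes the integrands have a sign so monotone convergence applies). Since $W\ge 0$ and $w\ge 0$, the integrand $Ww\phi\ge 0$, so
\begin{equation*}
\int_{B_1\setminus\{0\}} Ww\,\phi = -\int_{B_1\setminus\{0\}} w\,\Delta\phi \le \int_{B_\rho\setminus B_{\rho/2}} |w|\,|\Delta\phi|\lesssim \int_{B_\rho\setminus B_{\rho/2}}|w| <\infty,
\end{equation*}
the last integral being finite because $w\in L^1_{loc}(B_1\setminus\{0\})$ and the annulus is compactly contained there. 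By monotone convergence $\int_{B_{\rho/2}}Ww=\int\lim_{\varepsilon\to0}Ww\,\mathbf{1}_{B_{\rho/2}\setminus B_\varepsilon}\le \liminf \int Ww\,\phi_\varepsilon<\infty$, giving $Ww\in L^1(B_{\rho/2})$, and combined with $Ww\in L^1_{loc}(B_1\setminus\{0\})$ this yields $Ww\in L^1_{loc}(B_1)$. The delicate point to get right is the vanishing of the boundary term on $\partial B_\varepsilon$: with $w=O(\varepsilon^{-(n-2)})$ and the cutoff smooth there, the term $\int_{\partial B_\varepsilon}(\partial_\nu w\,\phi - w\,\partial_\nu\phi)$ must be shown to tend to $0$, which requires pairing the growth of $w$ against the surface measure $\varepsilon^{n-1}$ and a companion bound on $\partial_\nu w$; this is where the precise exponent $n-2$ in \eqref{e7} is used, and it is the crux of the argument.
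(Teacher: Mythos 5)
Your proposal takes a genuinely different route from the paper, but it is left incomplete at exactly the step you yourself flag as ``the crux.'' The paper never introduces inner boundary integrals at all: it tests the equation against $\zeta_\epsilon^4$ where $\zeta_\epsilon$ \emph{vanishes} on $B_\epsilon$, chooses the fourth power so that $\Delta\zeta_\epsilon^4$ retains a factor $\zeta_\epsilon^2$, and then applies Cauchy--Schwarz with weight $W$ to the inner annular term to get
\begin{equation*}
\Bigl|\int_{\epsilon<|x|<2\epsilon} w\,\Delta\zeta_\epsilon^4\Bigr| \lesssim \Bigl(\int Ww\,\zeta_\epsilon^4\Bigr)^{1/2}\Bigl(\int \frac{w}{|x|^4 W}\Bigr)^{1/2},
\end{equation*}
closing the self-referencing inequality $A^2\lesssim C_2A+C_1$ for $A=(\int Ww\,\zeta_\epsilon^4)^{1/2}$. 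This is precisely where the hypothesis $\frac{1}{|x|^{n+2}W}\in L^1$ enters (via $\frac{w}{|x|^4W}=O(\frac{1}{|x|^{n+2}W})\in L^1$, which is \eqref{e18} in the paper). Your argument never invokes that hypothesis, which should already have been a warning sign.

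The gap in your version is concrete. Since your $\phi$ is identically $1$ near the origin, the term $w\,\partial_\nu\phi$ on $\partial B_\varepsilon$ vanishes identically, and the only surviving inner boundary term is $\int_{\partial B_\varepsilon}\partial_\nu w$. The mechanism you gesture at --- ``pairing the growth of $w$ against the surface measure $\varepsilon^{n-1}$'' --- controls nothing here, because $w=O(|x|^{2-n})$ gives \emph{no} pointwise bound on $\partial_\nu w$; there is no ``companion bound on $\partial_\nu w$'' available from the hypotheses, and you do not produce one. What would actually close the argument is a monotonicity/sign argument you never state: setting $g(\varepsilon)=\int_{\partial B_\varepsilon}\partial_r w=\varepsilon^{n-1}h'(\varepsilon)$ with $h(\varepsilon)=\int_{S^{n-1}}w(\varepsilon\theta)\,d\theta\ge0$, the identity $\int_{B_r\setminus B_\varepsilon}Ww=g(\varepsilon)-g(r)$ shows $g$ is nonincreasing in $\varepsilon$ since $Ww\ge0$; if $g(\varepsilon)\to+\infty$ then $h'(\varepsilon)\gtrsim\varepsilon^{1-n}$ for small $\varepsilon$, forcing $h(\varepsilon)\to-\infty$, contradicting $w\ge0$. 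That is a correct completion, and it shows your route can in fact be made to work (and, interestingly, without using $\frac{1}{|x|^{n+2}W}\in L^1$ for this particular lemma), but it is a different idea from the one you describe and you did not carry it out. As written, your proof has a hole precisely where you announce the crux lies.

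One smaller point: your opening paragraph of failed decompositions is correct in its diagnosis (a pointwise bound on $w$ alone cannot give $Ww\in L^1$), but the Cauchy--Schwarz idea you discard there is actually the right tool --- it must just be applied to the \emph{cutoff} integral rather than pointwise, which is exactly what the paper does.
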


\begin{proof}
   Since $Ww\in L^1_{loc}(B_1\setminus\{0\})$, we only need to show that $Ww\in L^1$ near $0$. Let $\zeta_\epsilon$ be a cut-off function on $B_1$ such that $\zeta_\epsilon = 0$ when $|x|\le\epsilon$ and $|x|\ge\frac{3}{4}$; $\zeta_\epsilon = 1$ when $2\epsilon\le|x|\le\frac{1}{2}$;   $|\nabla^k \zeta_\epsilon|\lesssim  {\epsilon^{-k}}$ on $ \epsilon<|x|<2\epsilon$, and $|\nabla^k \zeta_\epsilon|\lesssim 1$ on $\frac{1}{2}<|x|<\frac{3}{4}, k\le 2$.

 Since $\zeta_\epsilon^4$ is a  a testing function   for $B_1\setminus \{0\}$, we have by \eqref{nr}
 \begin{equation}\label{e10}
     \int_{B_1} -\zeta_\epsilon^4\Delta w = \int_{B_1} Ww \zeta_\epsilon^4 .
 \end{equation}
The right hand side is nonnegative since  $Ww\ge 0$ by assumption. Using Stokes' theorem, the left hand side 
 \begin{equation}\label{e11}
     \begin{split}
     \int_{B_1} -\zeta_\epsilon^4\Delta w =     \int_{B_1} -w\Delta \zeta_\epsilon^4     =   \int_{\epsilon<|x|<2\epsilon}-w \Delta \zeta_\epsilon^4  + \int_{\frac{1}{2}<|x|<1}-w \Delta \zeta_\epsilon^4.
     \end{split}
 \end{equation}
Firstly, \begin{equation}\label{e12}
    \left|\int_{\frac{1}{2}<|x|<1}-w \Delta \zeta_\epsilon^4\right|\le  C_1
\end{equation}
for some constant $C_1>0$.
By the fact that  $  \Delta \zeta_\epsilon^4 = 4\zeta_\epsilon^2\left(3|\nabla \zeta_\epsilon|^2+ \zeta_\epsilon\Delta \zeta_\epsilon\right) $ and H\"older inequality, 
 $$ \left|\int_{\epsilon<|x|<2\epsilon}w \Delta \zeta_\epsilon^4 \right|\lesssim \int_{\epsilon<|x|<2\epsilon} {\epsilon^{-2}} w \zeta_\epsilon^2\lesssim \int_{\epsilon<|x|<2\epsilon}\frac{w \zeta_\epsilon^2}{|x|^2} \le \left(\int_{\epsilon<|x|<2\epsilon}  Ww \zeta_\epsilon^4  \right)^\frac{1}{2}\left(\int_{\epsilon<|x|<2\epsilon}\frac{w}{ |x|^4W} \right)^\frac{1}{2}. $$
Note that  \begin{equation}\label{e18}
    \frac{w}{ |x|^4W}\in L^1(B_\frac{1}{2}) 
\end{equation} by \eqref{e5} and \eqref{e7}.  Then $\left(\int_{\epsilon<|x|<2\epsilon} \frac{w}{ |x|^4W}\right)^\frac{1}{2}\le C_2$ for a constant $C_2>0$ and thus
\begin{equation}\label{e9}
    \left|\int_{\epsilon<|x|<2\epsilon}w \Delta \zeta_\epsilon^4 \right|\le C_2 \left(\int_{\epsilon<|x|<2\epsilon}  Ww \zeta_\epsilon^4  \right)^\frac{1}{2}\le C_2  \left(\int_{B_1}  Ww \zeta_\epsilon^4  \right)^\frac{1}{2}.
\end{equation} 
Combining \eqref{e10}-\eqref{e9}, we have for $A: = \left(\int_{B_1} Ww \zeta_\epsilon^4 \right)^\frac{1}{2}$,
$$  A^2\lesssim C_2A +C_1.$$
This implies $A$ is bounded by a constant dependent only on $C_1$ and $C_2$.  Passing $\epsilon$ to $0$, we have the desired integrability property for $ Ww$ near $0$. 

\end{proof}

\begin{lem}\label{rs}
  For $W$ and $w$ in Theorem \ref{ca}, we have   $w$ is a weak solution to \begin{equation*} 
    -\Delta w = Ww\ \ \text{on} \ \ B_1.
\end{equation*}
\end{lem}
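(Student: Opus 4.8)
The plan is to upgrade the singularity-removal argument of Lemma~\ref{HP} (the Harvey--Polking type lemma) so that it applies to $w$ even though $w$ itself is only assumed to be $O(|x|^{-(n-2)})$ near $0$ rather than flat. First I would recall that by Lemma~\ref{l1} we already know $Ww\in L^1_{loc}(B_1)$, so the right-hand side of the equation is a legitimate $L^1_{loc}$ datum on all of $B_1$; it only remains to check that the distributional Laplacian of $w$ on $B_1$ really equals $-Ww$ with no extra mass concentrated at the origin. Since $w\in L^1_{loc}(B_1\setminus\{0\})$ with $|w(x)|\lesssim |x|^{-(n-2)}$, and $n\ge 1$ with $n-2<n$, we have $w\in L^1_{loc}(B_1)$ as well, so $w$ and $Ww$ both define distributions on $B_1$ and the claim makes sense.

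The key step is the cutoff estimate. I would take the same family $\zeta_\epsilon$ (or a simpler one-parameter family $\phi^\epsilon$ supported in $B_\epsilon$, equal to $1$ on $B_{\epsilon/2}$, with $|\nabla^k\phi^\epsilon|\lesssim \epsilon^{-k}$ for $k\le 2$) and write, for any test function $\phi$ on $B_1$,
\begin{equation*}
  \langle -\Delta w - Ww,\phi\rangle = \langle -\Delta w - Ww,\phi^\epsilon\phi\rangle,
\end{equation*}
because $(1-\phi^\epsilon)\phi$ is a test function on $B_1\setminus\{0\}$ and the equation \eqref{nr} holds there. Then I move the Laplacian onto the cutoff, $\langle -\Delta w,\phi^\epsilon\phi\rangle = \langle w, -\Delta(\phi^\epsilon\phi)\rangle$, and expand $\Delta(\phi^\epsilon\phi) = \phi\Delta\phi^\epsilon + 2\nabla\phi^\epsilon\cdot\nabla\phi + \phi^\epsilon\Delta\phi$. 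The term with $\phi^\epsilon\Delta\phi$ is controlled by $\int_{B_\epsilon}|w|\to 0$ since $w\in L^1_{loc}$; likewise $\langle Ww,\phi^\epsilon\phi\rangle\lesssim\int_{B_\epsilon}|Ww|\to 0$ by Lemma~\ref{l1}. The genuinely delicate contributions are
\begin{equation*}
  \int_{\epsilon<|x|<\epsilon} |w|\,|\Delta\phi^\epsilon|\lesssim \epsilon^{-2}\int_{\epsilon/2<|x|<\epsilon}|w|
  \quad\text{and}\quad
  \int |w|\,|\nabla\phi^\epsilon|\lesssim \epsilon^{-1}\int_{\epsilon/2<|x|<\epsilon}|w|,
\end{equation*}
and here I use $|w|\lesssim|x|^{-(n-2)}$: on the annulus $\epsilon/2<|x|<\epsilon$ this gives $\int |w|\lesssim \epsilon^{-(n-2)}\cdot\epsilon^{n}=\epsilon^{2}$, so the first quantity is $O(1)$ and the second is $O(\epsilon)$. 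The $O(1)$ bound is not enough by itself, which is exactly the obstacle.

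The main obstacle, and how I would resolve it: the crude pointwise bound only shows the annular remainder stays bounded, not that it vanishes, so I need to extract the extra decay from the equation rather than from \eqref{e7} alone. Following the structure already used in Lemma~\ref{l1}, I would instead test against $(\phi^\epsilon)^4\phi$ (a fourth power, so that $\Delta$ of it still carries two factors of $\phi^\epsilon$) and run the Cauchy--Schwarz splitting $\epsilon^{-2}\int w (\phi^\epsilon)^2 \lesssim \big(\int Ww(\phi^\epsilon)^4\big)^{1/2}\big(\int \frac{w}{|x|^4 W}\big)^{1/2}$ as in \eqref{e9}, where the second factor tends to $0$ because $\frac{w}{|x|^4W}\in L^1(B_{1/2})$ by \eqref{e18} and the domain of integration shrinks to a null set, while the first factor is bounded (indeed $\to 0$) by Lemma~\ref{l1}. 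Hence every boundary/annular term tends to $0$ as $\epsilon\to0$, leaving $\langle -\Delta w - Ww,\phi\rangle = 0$ for all test $\phi$, i.e. $-\Delta w = Ww$ in $\mathcal{D}'(B_1)$. I would close by remarking that $w$ then automatically inherits interior regularity ($W^{2,2}_{loc}$ or better) from elliptic theory, though for the statement as written only the distributional identity on $B_1$ is needed.
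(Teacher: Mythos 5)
Your proof is correct and rests on the same key inequality as the paper's: the Cauchy--Schwarz splitting $\epsilon^{-2}\int_{B_\epsilon} w \le \int_{B_\epsilon}\frac{w}{|x|^2} \le \big(\int_{B_\epsilon}Ww\big)^{1/2}\big(\int_{B_\epsilon}\frac{w}{|x|^4 W}\big)^{1/2}$, with both factors tending to zero by Lemma~\ref{l1} and \eqref{e18}. The fourth-power cutoff detour is unnecessary, though: Lemma~\ref{HP} only requires $\lim_{\epsilon\to 0}\epsilon^{-2}\int_{|x|<\epsilon}|w|=0$ (the order-$l=2$ condition, not full flatness), so once you establish that single limit via the Cauchy--Schwarz estimate you can simply invoke Lemma~\ref{HP}, which is exactly the paper's one-line argument.
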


\begin{proof}
 In view of   Lemma \ref{HP}, we only need to prove that 
 \begin{equation*}
      \lim_{\epsilon\rightarrow 0} \epsilon^{-2}\int_{|x|<\epsilon}|w |    =0.
 \end{equation*}
 Indeed, 
 \begin{equation*}
  \epsilon^{-2} \int_{|x|<\epsilon}|w |  \le \int_{|x|<\epsilon}  \frac{ w}{|x|^2}\le \left(\int_{|x|<\epsilon}   Ww\right)^\frac{1}{2}\left(\int_{|x|<\epsilon} \frac{w}{ |x|^4W}\right)^\frac{1}{2},
 \end{equation*}
 which goes to zero as $\epsilon \rightarrow 0$ due to  the facts that $Ww\in L^1_{loc}(B_1)$ in Lemma \ref{l1} and $ \frac{w}{ |x|^4W}\in L^1_{loc}(B_1)$   in \eqref{e18}. Note that an inspection of the proof to Lemma \ref{l1} indicates that the assumption $W\notin L^1$ in Theorem \ref{ca} is actually not needed here.
 
\end{proof}

\begin{proof}[Proof of Theorem \ref{ca}: ] Assume by contradiction that $w\not\equiv 0$. By 
  Lemma \ref{rs}, we have $-\Delta w = Ww $ on $B_1$. In particular, $-\Delta w \ge 0$ on $B_1$. Namely, $w$ is a superharmonic function a.e. on $B_1$. Consequently, $w$ is lower semi-continuous on $B_1$.  Since  $w\ge 0$ on $B_1$,    there   exists some positive number $\gamma_0$ and a small neighborhood of $0$, say, $B_\epsilon$, such that $w\ge \gamma_0 $ almost everywhere on $B_\epsilon$. See, for instance, \cite{Ho2}. Noting that  $W\ge 0$, we further have  
$  -\Delta w\ge  \gamma_0 W\ge 0 $ on $B_\epsilon$. Since $   W\notin L^1$ near $0$, 
$$\int_{B_\epsilon} - \Delta w   \ge  \gamma_0\int_{B_\epsilon} W =\infty. $$ 
However,  this would contradict with  the fact that $  \int_{B_\epsilon}-\Delta w   =  \int_{B_\epsilon}Ww,  $  which is finite according to Lemma \ref{l1}. With $w$ replaced by $-w$, one can prove the case when $w\le 0$ on $B_1$. The proof is completed.
    
\end{proof}

\begin{proof}[Proof of Theorem \ref{main2} and Corollary \ref{ho2}:]  After performing the Kelvin transform as in Lemma \ref{be}, we have by Lemma \ref{be} {\it (3)} that $W  $   defined by \eqref{e16} satisfy the assumption in Theorem \ref{ca}. The rest of the proof to Theorem \ref{main2} is then  a consequence of Theorem \ref{ca}.  

In the setting of Corollary \ref{ho2}, one can also directly verify that $V$   satisfies both assumptions in  Theorem \ref{main2} as long as $\alpha>-2$ and $\alpha\ge n-4$.  Thus Corollary \ref{ho2} follows from Theorem \ref{main2}.

\end{proof}

In particular, as a consequence of Corollary \ref{ho2},  every non-trivial bounded weak solution to  $$ -\Delta  u = \frac{c^2}{|x|^\alpha} u\ \ \text{on}\ \ \mathbb R^n$$  must change sign near infinity if  $\alpha>-2$ for $n\le 2$, or $\alpha \ge  n-4  $ for $n\ge 3$.
\medskip

As will be seen immediately, Theorem \ref{ca}  only allows us to prove Theorem \ref{ei} when the dimension $n$ is small. To cover the case of higher dimensions, we shall need the following variant of Kato's inequality: 
\begin{lem}\cite{Ka, BC}
    let $w\in L_{loc}^1(\Omega)$, $f\in L_{loc}^1(\Omega)$ and $-\Delta w \ge f$ on $\Omega$ in the distribution sense. Let $\phi: \mathbb R\rightarrow \mathbb R$ be a Lipschitz concave function with $0\le \phi'\le C$ on $\mathbb R$. Then $\psi: =\phi\circ w\in L_{loc}^1(\Omega)$ and 
$$ -\Delta \psi \ge \phi'(w)  f\ \ \text{on}\ \ \Omega$$
in the distribution sense. 
\end{lem}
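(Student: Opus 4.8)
The plan is a two-step regularization: first smooth $w$ so that the classical chain rule is available, then smooth $\phi$ so that concavity can be exploited through the sign of $\phi''$, and finally pass to the two limits. Since $-\Delta w - f$ is a nonnegative distribution on $\Omega$, by the Riesz--Schwartz theorem it is a nonnegative Radon measure $\mu$, so $-\Delta w = f + \mu$ with $\mu\ge 0$. Fix mollifiers $\rho_\epsilon$ on $\mathbb R^n$ and $\eta_\delta$ on $\mathbb R$, and on $\Omega_\epsilon := \{x\in\Omega:\dist(x,\partial\Omega)>\epsilon\}$ put $w_\epsilon := w*\rho_\epsilon$, $f_\epsilon := f*\rho_\epsilon$, $\mu_\epsilon := \mu*\rho_\epsilon\ge 0$; these are smooth, $-\Delta w_\epsilon = f_\epsilon+\mu_\epsilon$ pointwise, and $w_\epsilon\to w$, $f_\epsilon\to f$ in $L^1_{loc}(\Omega)$. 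Likewise $\phi_\delta := \phi*\eta_\delta$ is smooth and concave (convolution preserves concavity), with $0\le\phi_\delta'=\phi'*\eta_\delta\le C$, $\phi_\delta\to\phi$ locally uniformly, and $\phi_\delta'\to\phi'$ off the (at most countable) set $N$ of non-differentiability points of $\phi$.

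For the smooth functions the chain rule gives $\Delta(\phi_\delta\circ w_\epsilon)=\phi_\delta'(w_\epsilon)\Delta w_\epsilon+\phi_\delta''(w_\epsilon)|\nabla w_\epsilon|^2$ on $\Omega_\epsilon$, and since $\phi_\delta''\le 0$, $\phi_\delta'\ge 0$, $\mu_\epsilon\ge 0$,
$$-\Delta(\phi_\delta\circ w_\epsilon)=\phi_\delta'(w_\epsilon)(f_\epsilon+\mu_\epsilon)-\phi_\delta''(w_\epsilon)|\nabla w_\epsilon|^2\ \ge\ \phi_\delta'(w_\epsilon)\,f_\epsilon\qquad\text{on }\Omega_\epsilon,$$
so $\int\phi_\delta(w_\epsilon)(-\Delta\varphi)\ge\int\phi_\delta'(w_\epsilon)f_\epsilon\,\varphi$ for every nonnegative $\varphi\in C_c^\infty(\Omega_\epsilon)$. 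Letting $\delta\to 0$ with $\epsilon$ fixed, $\phi_\delta(w_\epsilon)\to\phi(w_\epsilon)$ uniformly on $\operatorname{supp}\varphi$, while $\phi_\delta'(w_\epsilon)\to\phi'(w_\epsilon)$ a.e. and boundedly, which yields $-\Delta(\phi\circ w_\epsilon)\ge\phi'(w_\epsilon)\,f_\epsilon$ on $\Omega_\epsilon$. Then let $\epsilon\to 0$ along a sequence with $w_\epsilon\to w$ a.e.: since $\phi$ is Lipschitz, $\phi(w_\epsilon)\to\phi(w)=\psi$ in $L^1_{loc}$ and $\psi\in L^1_{loc}$ (because $|\phi(t)|\le|\phi(0)|+C|t|$), so the left-hand sides converge distributionally; and $\phi'(w_\epsilon)f_\epsilon\to\phi'(w)f=\psi'f$ in $L^1_{loc}$ since $|\phi'|\le C$ and $f_\epsilon\to f$ in $L^1_{loc}$. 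Passing to the limit in the tested inequality gives $-\Delta\psi\ge\psi'f$ on $\Omega$ in the distribution sense.

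The step I expect to require the most care is the almost-everywhere convergence $\phi'(w_\epsilon)\to\phi'(w)$ (and its $\delta$-analogue), i.e.\ controlling what happens on the set $\{w\in N\}$ where the limiting derivative is ambiguous. This is treated exactly as in \cite{Ka, BCMR}: since $\Delta w$ is a measure one has $w\in W^{1,1}_{loc}(\Omega)$, hence $\nabla w=0$ a.e.\ on each level set $\{w=c\}$, $c\in N$, and (by the standard level-set lemma) the absolutely continuous part of $\Delta w$ also vanishes a.e.\ there; combined with $\mu\ge 0$ and $0\le\phi'_\pm\le C$, taking $\psi'$ to be the Borel left-derivative $\phi'_-(w)$ makes the limiting inequality valid on $\{w\in N\}$ as well, so there is no net loss. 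Equivalently, one may first prove the lemma for $\phi\in C^1$ — where all the convergences above are immediate — and then remove that restriction by the approximation $\phi_\delta$. As the statement is classical we only outline this verification and refer to \cite{Ka, BCMR} for the details.
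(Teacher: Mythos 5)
The paper itself gives \emph{no} proof of this lemma: it is cited verbatim from \cite{Ka, BCMR} and used as a black box in the proof of Theorem \ref{ei}, so there is no in-paper argument to compare your proposal against. Judged on its own, your double-regularization sketch is the standard route to this variant of Kato's inequality, and the skeleton is sound: writing $-\Delta w = f+\mu$ with $\mu\ge 0$ a Radon measure via Riesz--Schwartz, mollifying in $x$ to make the chain rule literal, mollifying $\phi$ to exploit $\phi_\delta''\le 0$, discarding the two nonnegative terms $\phi_\delta'(w_\epsilon)\mu_\epsilon$ and $-\phi_\delta''(w_\epsilon)|\nabla w_\epsilon|^2$, and then passing $\delta\to 0$, $\epsilon\to 0$ against a nonnegative test function. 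The auxiliary claims you invoke (concavity is preserved by convolution with a nonnegative kernel, $0\le\phi_\delta'\le C$, $\phi$ Lipschitz $\Rightarrow$ $\psi\in L^1_{loc}$, $\Delta w\in\mathcal{M}_{loc}\Rightarrow w\in W^{1,1}_{loc}$) are all correct.

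The one place where the argument as written is not yet airtight --- and you flag it yourself --- is the pointwise a.e.\ convergence $\phi_\delta'(w_\epsilon)\to\phi'(w_\epsilon)$ (for fixed $\epsilon$) and $\phi'(w_\epsilon)\to\phi'(w)$ (as $\epsilon\to 0$). Both fail on the preimage of the countable set $N$ of corners of $\phi$, and that preimage need not be null in general. For the inner limit this is easy to repair: take a real-analytic mollifier (e.g.\ Gaussian), so that either $w_\epsilon$ is constant (in which case the lemma is trivial) or every level set $\{w_\epsilon=c\}$ is Lebesgue-null. For the outer limit the right fix is the one you gesture at: on $\{w=c\}$ with $c\in N$ one has $\nabla w=0$ a.e.\ (using $w\in W^{1,1}_{loc}$), hence the absolutely continuous part of $\Delta w$ vanishes a.e.\ there, i.e.\ $f\le 0$ a.e.\ on that set after subtracting $\mu$; since any subsequential limit $\ell(c)$ of $\phi_\delta'(c)$ satisfies $\phi_+'(c)\le\ell(c)\le\phi_-'(c)$ and $f\le 0$ there, replacing $\ell(c)f$ by $\phi_-'(c)f$ only \emph{decreases} the right-hand side, so the inequality survives with $\psi':=\phi_-'(w)$ as the Borel representative. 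Spelling out these two lines would turn your sketch into a complete, self-contained proof; as it stands the deferral to \cite{Ka, BCMR} is legitimate but does leave the most delicate step implicit.
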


\begin{proof}[Proof of Theorem \ref{ei}: ] Repeating a similar procedure as in Lemma \ref{be} with $V\equiv c^2$, it boils down to prove for $W: = \frac{c^2}{|x|^4}$ on $B_1$, every weak solution to $-\Delta w = Ww$ on $B_1\setminus \{0\}$ must change sign in every neighborhood of $0$. By replacing  $w$ by $-w$, this is further reduced to showing that   every nonnegative weak solution to $-\Delta w = Ww$ on $B_1\setminus \{0\}$ must be identically zero.

If $n\le 4$, clearly  $W\in L_{loc}^\infty(B_1\setminus\{0\}), W\ge 0, W\notin L^1 $ near $0$, and $\frac{1}{|x|^{n+2}W }\in L^1(B_1)$. Moreover $   w(x) =  \frac{1}{|x|^{n-2}}u(\frac{x}{|x|^2}) = \BigO{\frac{1}{|x|^{n-2}}}$ near $0$ by the boundedness assumption of $u$. If $w\ge 0$, then one can   apply  Theorem \ref{ca} to conclude that $w\equiv 0$.

    For $n\ge 5$,  assume by contradiction that  $w\not\equiv 0$ on $B_1$. Since $w\ge 0$ and $-\Delta w\ge 0$ on $B_1$, we have $ w\ge \gamma_0$ almost everywhere for some positive $\gamma_0$ on a small neighborhood $B_\epsilon$ of $0$.
Let $\phi: \mathbb R\rightarrow \mathbb R$ be the function such that $\phi(s) =0$ when $s\le {\gamma_0}$;     $\phi(s) = \log \frac{1}{{\gamma_0}} - \log \frac{1}{s}$ when $ s\ge {\gamma_0}$. It is straight forward to verify that $\phi$ satisfies the assumption in Kato's inequality. Let $\psi: = \phi\circ w $. Then \begin{equation}\label{e13}
    0\le \psi\le \log\frac{1}{{\gamma_0}}\ \ \text{on}\ \ B_1,
\end{equation}  and $$\psi = \log\frac{1}{{\gamma_0}} - \log \frac{1}{w} \ \ \text{on}\ \ B_\epsilon.$$ By Kato's inequality,
    $$ -\Delta \psi\ge \phi'(w) Ww = \frac{c^2}{|x|^4}\ \ \text{on}\ \ B_\epsilon.$$

    On the other hand, note that $\Delta \left(\frac{1}{|x|^{2}}\right) = \frac{ 8-2n }{ |x|^{4}}$. So
    $   -\Delta \psi \ge \Delta \left(\frac{c^2}{(8-2n) |x|^{2}}\right) $ on $B_\epsilon $, or equivalently,
     $$  -\Delta \left(  \psi-  \frac{c^2}{(2n-8) |x|^{2}}\right)\ge 0\ \ \text{on}\ \ B_\epsilon.$$
Due to the fact that $\psi-  \frac{c^2}{(2n-8) |x|^{2}}\in L^1(B_\epsilon) $,    there exists some   constant $\beta>0$ such that 
     $$ \psi \ge  \frac{c^2}{(2n-8) |x|^{2}} - \beta\ \ \text{on}\ \ B_\frac{\epsilon}{2}.$$
    However,  $2n-8>0$ when $n\ge 5$, which would imply that $\psi$ is unbounded from above, contradicting  \eqref{e13}. 
     
\end{proof}

%\begin{proof}[Proof of Theorem \ref{main3}:] Assume that $w$ is superharmonic when $|x|>>1$. Namely, $Ww\ge 0$ when $|x|>>1$. Thus we only  need to show Theorem \ref{ca} holds true with the sign assumptions  $W\ge 0$ and $w\ge 0$ there replaced by  $Ww\ge 0$.  Firstly, a line-by-line inspection of the proofs to Lemma \ref{l1} and Lemma \ref{rs} verifies that $w$ is a weak solution to $  -\Delta w = Ww\ge 0$ on $ B_1.$

%  Assume by contradiction that $w\not\equiv 0$. Replacing $w$ by $-w$ if necessary, we further assume that $w^+: = \max{w, 0}\not\equiv 0$. By Kato's inequality (see \cite[Proposition 6.6]{Po}), we have $ -\Delta w^+ \le -\chi_{\{w>0\}}\Delta w = Ww \ge 0$  Since  $w\ge 0$ on $B_1$, by mean-value inequality  there   exists some positive number $\gamma_0$ and a small neighborhood of $0$, say, $B_\epsilon$, such that $w\ge \gamma_0 $ on $B_\epsilon$???. Noting that  $W\ge 0$, we further have  $  -\Delta w\ge  \gamma_0 W\ge 0 $ on $B_\epsilon$. By assumption   that $   W\notin L^1$ near $0$, $$\int_{B_\epsilon} - \Delta w   \ge  \gamma_0\int_{B_\epsilon} W =\infty. $$ However,  this would contradict with  the fact that $  \int_{B_\epsilon}-\Delta w   =  \int_{B_\epsilon}Ww,  $  which is finite according to Lemma \ref{l1}.
%\end{proof}
\bigskip
%\appendix
%\section{Appendix} 

\bibliographystyle{alphaspecial}

\fontsize{11}{11}\selectfont

\vspace{0.7cm}

\noindent pan@pfw.edu,

\vspace{0.2 cm}

\noindent Department of Mathematical Sciences, Purdue University Fort Wayne, Fort Wayne, IN 46805-1499, USA.\\

\noindent zhangyu@pfw.edu,

\vspace{0.2 cm}

\noindent Department of Mathematical Sciences, Purdue University Fort Wayne, Fort Wayne, IN 46805-1499, USA.\\

\end{document}